\def\submission{0}
\newtheorem{theorem}{Theorem}
\newtheorem{definition}{Definition}
\newtheorem{lemma}{Lemma}
\newtheorem{remark}{Remark}
\newtheorem{corollary}{Corollary}
\newtheorem{proposition}{Proposition}
\newtheorem{assumption}{Assumption}
\newtheorem{claim}{Claim}
\newtheorem*{theorem*}{Theorem}
\newtheorem*{example*}{Example} 
\newtheorem*{definition*}{Definition}
\newtheorem*{lemma*}{Lemma}
\newtheorem*{remark*}{Remark}
\newtheorem*{corollary*}{Corollary}
\newtheorem*{proposition*}{Proposition}
\newtheorem*{assumption*}{Assumption}
\newtheorem*{claim*}{Claim}
\newtheoremstyle{TheoremNum}
        {\topsep}{\topsep}              
        {\itshape}                      
        {}                              
        {\bfseries}                     
        {.}                             
        { }                             
        {\thmname{#1}\thmnote{ \bfseries #3}}
\theoremstyle{TheoremNum}
\newtheoremstyle{LemmaNum}
        {\topsep}{\topsep}              
        {\itshape}                      
        {}                              
        {\bfseries}                     
        {.}                             
        { }                             
        {\thmname{#1}\thmnote{ \bfseries #3}}
\theoremstyle{LemmaNum}
\newcommand{\F}{\mathcal{F}}
\renewcommand{\S}{ \mathbb{ S } }
\newcommand{\x}{ \mathbf{ x } } 
\newcommand{\y}{ \mathbf{ y } }
\newcommand{\St}{\mathrm{St}}
\renewcommand{\P}{\mathbf{P}}
\newcommand{\V}{\mathbf{V}}
\newcommand{\0}{\bm{0}}
\renewcommand{\u}{\mathbf{u}}
\newcommand{\1}{\text{\raisebox{.5pt}{\textcircled{\raisebox{-.9pt} {1}}}}
} 
\newcommand{\2}{\text{\raisebox{.5pt}{\textcircled{\raisebox{-.9pt} {2}}}}
}
\newcommand{\Ind}{ \mathbb{I} }
\renewcommand{\Pr}{ \mathbb{P} }
\newcommand{\R}{\mathbb{R}}
\newcommand{\E}{\mathbb{E}}
\renewcommand{\v}{\mathbf{v}}
\renewcommand{\[}{\left[ }
\renewcommand{\]}{\right] }
\newcommand{\<}{\left< }
\renewcommand{\>}{\right> }
\renewcommand{\(}{\left( }
\renewcommand{\)}{\right) }
\newcommand{\wh}{\widehat }
\newcommand{\Loj}{\L ojasiewicz } 
\newcommand{\dom}{\mathrm{dom} } 
\def\submission{0} 
\begin{document} 

\title{Convergence Rates of Stochastic Zeroth-order Gradient Descent for \L ojasiewicz Functions} 

\author{Tianyu Wang\footnote{wangtianyu@fudan.edu.cn} \quad and \quad Yasong Feng} 

\date{} 

\maketitle 

\begin{abstract} 
    We prove convergence rates of Stochastic Zeroth-order Gradient Descent (SZGD) algorithms for \L ojasiewicz functions. 
The SZGD algorithm iterates as 
\begin{align*} 
    \mathbf{x}_{t+1} = \mathbf{x}_t - \eta_t \widehat{\nabla} f (\mathbf{x}_t), \qquad t = 0,1,2,3,\cdots , 
\end{align*} 
where $f$ is the objective function that satisfies the \L ojasiewicz inequality with \L ojasiewicz exponent $\theta$, $\eta_t$ is the step size (learning rate), and $ \widehat{\nabla} f (\mathbf{x}_t) $ is the approximate gradient estimated using zeroth-order information only. 
We show that, for smooth \L ojasiewicz functions, the sequence $\{ \mathbf{x}_t \}_{t\in\mathbb{N}}$ generated by SZGD converges to a point $\mathbf{x}_\infty$ almost surely, and $\mathbf{x}_\infty$ is a critical point of $f$. 
If $\theta \in (0,\frac{1}{2}]$, $ \{ f (\mathbf{x}_t) - f (\mathbf{x}_\infty) \}_{t \in \mathbb{N} } $, $ \{ \sum_{s=t}^\infty \| \mathbf{x}_{s+1} - \mathbf{x}_{s} \|^2 \}_{t \in \mathbb{N} } $ and $ \{ \| \mathbf{x}_t - \mathbf{x}_\infty \| \}_{t \in \mathbb{N} } $ ($\| \cdot \|$ is the Euclidean norm) converge to zero linearly in expectation. 
If $\theta \in (\frac{1}{2}, 1)$, then $ \{ f (\mathbf{x}_t) - f (\mathbf{x}_\infty) \}_{t \in \mathbb{N} } $ (and $ \{ \sum_{s=t}^\infty \| \mathbf{x}_{s+1} - \mathbf{x}_s \|^2 \}_{t \in \mathbb{N} } $) converges to zero at rate $O \left( t^{\frac{1}{1 - 2\theta}}  \right) $ in expectation; $ \{ \| \mathbf{x}_{t} - \mathbf{x}_\infty \| \}_{t \in \mathbb{N} } $ converges to zero at rate $O \left( t^{\frac{1-\theta}{1-2\theta}} \right) $ in expectation. Our results show that $ \{ f (\mathbf{x}_t) - f (\mathbf{x}_\infty) \}_{t \in \mathbb{N} } $ can converge faster than $ \{ \| \mathbf{x}_t - \mathbf{x}_\infty \| \}_{t \in \mathbb{N} }$. 

Also, we show that for convex nonsmooth \L ojasiewicz functions with \L ojasiewicz exponent $\theta \in (\frac{1}{2},1)$, the sequence $ \{ f (\mathbf{x}_t) - f (\mathbf{x}_\infty) \}_{t \in \mathbb{N}} $ generated by the proximal algorithm converges at rate $O \left( t^{\frac{1}{1-2\theta}} \right) $. This rate is faster than the state-of-the-art convergence rate of $ \{ \| \mathbf{x}_t - \mathbf{x}_\infty \| \}_{t \in \mathbb{N} } $ for $\theta \in (\frac{1}{2},1)$. 
 
\end{abstract} 

\section{Introduction}

    


Zeroth order optimization is a central topic in optimization and related fields. Algorithms for zeroth order optimization find important real-world applications, since often times in practice, we cannot directly access the derivatives of the objective function. 
To optimize the function in such scenarios, one can estimate the gradient/Hessian first and deploy first/second order algorithms with the estimated derivatives. Previously, many authors have considered this problem. Yet stochastic zeroth order methods for \Loj functions have not been carefully investigated (See Section \ref{sec:related-works} for more discussion).

\Loj functions are real-valued functions that satisfy the \Loj inequality \cite{loj1963}. 
The \Loj inequality generalizes the Polyak--\Loj inequality \cite{polyak1963}, and is a special case of the Kurdyka--\Loj (KL) inequality \cite{klz1995,Kurdyka1998OnGO}. 
Such functions may give rise to spiral gradient flow even if smoothness and convexity are assumed \cite{https://doi.org/10.1112/blms.12586}. Also, \Loj functions may not be convex. The compatibility with nonconvexity has gained them increasing amount of attention, due to the surge in nonconvex objectives from machine learning and deep learning. 
Indeed, the \Loj inequality can well capture the local landscape of neural network losses, since  some good local approximators for neural network losses, including polymonials and semialgebraic functions, locally satisfy the \Loj inequality. 


Previously, the understanding of \Loj functions have been advanced by many researchers \cite{polyak1963,loj1963,klz1995,Kurdyka1998OnGO,kurdyka2000proof,bolte2007lojasiewicz,attouch2009convergence,li2018calculus}. In their classic work, \cite{attouch2009convergence} proved the state-of-the-art convergence rate for $\{ \| \x_t - \x_\infty \| \}_t$, where $\{ \x_t \}_t$ is the sequence generated by the proximal algorithm, $\x_\infty$ is the limit of $\{ \x_t \}_t$ that is also a critical point of the objective $f$, and $\| \cdot \|$ denotes the Euclidean norm. 

\subsubsection*{Convergence Rates of SZGD} 

In this paper, we study the performance of gradient descent with estimated gradient for (smooth) \Loj functions. 
In particular, we study algorithms governed by the following rule 
\begin{align} 
    \x_{t+1} = \x_t - \eta_t \wh{\nabla} f_{\V_t}^{\delta_t} (\x_t), \qquad t = 0,1,2,\cdots , 
    \label{eq:gd} 
\end{align} 
where $f : \R^n \to \R$ is the unknown objective function, $\eta_t > 0$ is the step size (learning rate), and $ \wh{\nabla} f_{\V_t}^{\delta_t} (\x_t) $ is the estimator of $\nabla f $ at $\x_t$ defined as follows. 
\begin{align} 
    \wh{\nabla} f_{\V_t}^{\delta_t} (\x) := \frac{n}{2 \delta_t k} \sum_{i=1}^k \( f ( \x + \delta_t \v_{t,i} ) - f ( \x - \delta_t \v_{t,i}) \) \v_{t,i}, \quad \forall \x \in \R^n, \label{eq:def-grad-est-intro} 
\end{align} 
where $ \V_t = [ \v_{t,1}, \v_{t,2}, \cdots, \v_{t,k}] $ is uniformly sampled from the Stiefel manifold $ \text{St} (n,k) := \{ \mathbf{X} \in \R^{n \times k} : \mathbf{X}^\top \mathbf{X} = \mathbf{I}_k \} $, and $\delta_t := 2^{-t}$ is the finite difference granularity. Previously, the statistical properties of (\ref{eq:def-grad-est-intro}) have been investigated by \cite{fengwang2022} (See Section \ref{sec:prelim}). 
Throughout, we use Stochastic Zeroth-order Gradient Descent (SZGD) to refer to the above update rule (\ref{eq:gd}) with the estimator (\ref{eq:def-grad-est-intro}).

%
%
%

In this paper, we prove the following results for SZGD. Let the objective function $f$ 
satisfy the \Loj inequality with exponent $\theta$ (Definition \ref{def:loj}). Let $\{ \x_t \}_t$ be the sequence generated by SZGD. Then under Assumption \ref{assumption:smooth}, 
\begin{itemize}[align=left,leftmargin=*]  
    \item The sequence $ \{ \x_t \}_t $ converges to a limit $\x_\infty$ almost surely. In addition, $\x_\infty$ is a critical point of $f$. 
    \item If $\theta \in ( 0,\frac{1}{2} ]$, then there exists $Q > 1$ such that $ \{ Q^t ( f (\x_t) - f (\x_\infty) ) \}_t $ converges to zero in expectation. In other words, if $\theta \in (0,\frac{1}{2}]$, $ \{f (\x_t) - f (\x_\infty) \}_t $ converges to zero linearly in expectation. 
    \item If $\theta \in (\frac{1}{2}, 1)$, $\{ f (\x_t) - f (\x_\infty) \}_t$ converges to zero at rate $ O \( t^{ \frac{1 }{ 1 - 2\theta } } \) $ in expectation. 
    This rate is faster than the convergence rate of $\{ \| \x_t - \x_\infty \| \}_t$ previously obtained in \cite{attouch2009convergence}, where $ \{ \x_t \}_t $ is generated by the proximal algorithm.  
\end{itemize}

Also, we prove the following convergence rate for $\{ \| \x_t - \x_\infty \| \}_t$ and $\{ \sum_{s=t}^\infty \| \x_{s+1} - \x_{s} \|^2 \}_t$. 
\begin{itemize}[align=left,leftmargin=*] 
    \item If $\theta \in ( 0,\frac{1}{2} ]$, then there exists $Q > 1$ such that $ \{ Q^t \sum_{s=t}^\infty \| \x_{s+1} - \x_{s} \|^2 \}_t $ and $ \{ Q^t \| \x_{t} - \x_{\infty} \| \}_t $ converges to zero in expectation. In other words, if $\theta \in ( 0,\frac{1}{2} ]$, $ \{ \sum_{s=t}^\infty \| \x_{s+1} - \x_{s} \|^2 \}_t $ and $ \{ \| \x_{t} - \x_{\infty} \| \}_t $ converges to zero linearly in expectation. 
    \item If $\theta \in (\frac{1}{2}, 1)$, then $ \{ \sum_{s=t}^\infty \| \x_{s+1} - \x_{s} \|^2 \}_t $ converges to zero at rate $ O \( t^{\frac{1}{ 1 - 2\theta}} \) $ in expectation, and $ \{ \| \x_{t} - \x_{\infty} \| \}_t $ converges to zero at rate $ O \( t^{\frac{1-\theta}{ 2\theta - 1}} \) $ in expectation. 
\end{itemize} 

For $ \{ \| \x_t - \x_\infty \| \}_t $, the convergence rate of SZGD matches the convergence rate of the proximal algorithm \cite{attouch2009convergence}, for any $\theta \in (\frac{1}{2},1)$. This means one does not need a proximal oracle to obtain the state-of-the-art of convergence rate for $ \{ \| \x_t - \x_\infty \| \}_t $. 
More importantly, the above results imply that for any fixed $ \theta \in (\frac{1}{2}, 1) $, $ \{ f (\x_t) - f (\x_\infty) \}_t $ can converge much faster than $ \{ \| \x_t - \x_\infty \| \}_t $. 
Also, we show that the Gradient Descent (GD) algorithm converges at the same rate as SZGD. 

\subsubsection*{Convergence Rates of the Proximal Algorithm} 

We also provide convergence rates of $ \{ f (\x_t) - f (\x_\infty) \}_t$ for the proximal algorithm. Let $ \{ \x_t \}_t $ be the sequence generated by the proximal algorithm. In their classic work \cite{attouch2009convergence}, Attouch and Bolte showed that, when $ \theta \in (\frac{1}{2}, 1), $ $ \{ \| \x_t - \x_\infty \| \}_t$ converges to zero at rate $ O \( t^{\frac{1-\theta}{1-2\theta} } \) $.  
In this paper, we also prove the following result. 
\begin{itemize}[align=left,leftmargin=*]  
    \item Let $\{ \x_t \}_t$ be generated by the proximal algorithm. If $\theta \in (\frac{1}{2}, 1)$ and $f$ is convex, then $\{ f (\x_t) - f (\x_\infty) \}_t$ converges to zero at rate $ O \( t^{ \frac{1 }{ 1 - 2\theta } } \) $. Note that this rate can be much faster than the convergence rate of $ \{ \| \x_t - \x_\infty \| \}_t $ previously obtained in \cite{attouch2009convergence}, which is of order $ O \( t^{\frac{1-\theta}{1-2\theta} } \) $. 
\end{itemize} 

\begin{remark}[Summary of contributions]
    We prove convergence rates of SZGD for \L ojasiewicz Functions. 
    Our results generalize the recent work that studies SZGD for Polyak--\Loj functions \cite{kozak2022zeroth}. 
    
    More importantly, our results show that, when the \Loj exponent $\theta \in (\frac{1}{2}, 1)$, $ \{ f (\x_t) - f (\x_\infty) \}_t $ can converge much faster than $ \{ \| \x_t - \x_\infty \| \}_t $. This observation is true for both SZGD and the proximal algorithm. Recently, \cite{https://doi.org/10.1112/blms.12586} constructs a convex \Loj function whose gradient flow is spiral, and fails the Thom's gradient conjecture. Our results show that although the gradient flow may be spiral, the function value converges relatively fast. Thus the results in  \cite{https://doi.org/10.1112/blms.12586} is not too pessimistic. 

\end{remark}

\section{Related Works}
\label{sec:related-works} 


Zeroth order optimization is a central scheme in many fields (e.g., \cite{nelder1965simplex,conn2009introduction,shahriari2015taking}). Among many zeroth order optimization mechanisms, a classic and prosperous line of works focuses on estimating gradient/Hessian using zeroth order information and use the estimated gradient/Hessian for downstream optimization algorithms. 






A classic line of related works is the Robbins--Monro--Kiefer--Wolfowitz-type algorithms \cite{robbins1951stochastic,kiefer1952stochastic} from stochastic approximation. See (e.g., \cite{kushner2003stochastic,benveniste2012adaptive}) for exposition. The Robbins–Monro and Kiefer–Wolfowitz scheme has been used in stochastic optimization and related fields (e.g., \cite{nemirovski2009robust,wang2017stochastic}). In particular, \cite{bertsekas2000gradient} have shown that stochastic gradient descent algorithm either converges to a stationary point or goes to infinity, almost surely. While the results of \cite{bertsekas2000gradient} is quite general, no convergence {rate} is given. As an example of recent development, \cite{wang2017stochastic} showed that convergence results for Robbins--Monro when the objective is nonconvex. While the study of Robbins--Monro and Kiefer--Wolfowitz has spanned 70 years, stochastic zeroth order optimization has not come to its modern form until early this century. 

In recent decades, due to lack of direct access to gradients in real-world applications, zeroth order optimization has attracted the attention of many researchers. In particular, \cite{flaxman2005online} introduced the single-point gradient estimator for the purpose of bandit learning. Afterwards, many modern gradient/Hessian estimators have been introduced and subsequent zeroth order optimization algorithms have been studied. To name a few, \cite{duchi2015optimal,nesterov2017random} have studied zeroth order optimization algorithm for convex objective and established in expectation convergence rates. \cite{balasubramanian2021zeroth} used the Stein's identity for Hessian estimators and combined this estimator with cubic regularized Newton's method \cite{nesterov2006cubic}.  \cite{wang2021GW,wang2022hess} provided refined analysis of Hessian/gradient estimators over Riemannian manifolds. \cite{li2022stochastic} studied zeroth order optimization over Riemannian manifolds and proved in expectation convergence rates. The above mentioned stochastic zeroth order optimization works focus {in expectation} convergence rates. Probabilistically stronger results have also been established for stochastic optimization methods recently. For example, \cite{lan2012optimal} provide high probability convergence rates for composite optimization problems. 

The study of \Loj functions forms an important cluster of related works. \Loj functions satisfies the \Loj inequality with \Loj exponent $\theta$ \cite{loj1963}. An important special case of the \Loj inequality is the Polyak--\Loj inequality \cite{polyak1963}, which corresponds to the \Loj inequality with $\theta = \frac{1}{2}$. In \cite{klz1995,Kurdyka1998OnGO}, the \Loj inequality was generalized to the Kurdyka--\Loj inequality. Subsequently, the geometric properties has been intensively studied, along with convergence studies of optimization algorithms on Kurdyka--\Loj-type functions \cite{polyak1963,loj1963,klz1995,Kurdyka1998OnGO,kurdyka2000proof,bolte2007lojasiewicz,attouch2009convergence,li2018calculus}. Yet no prior works focus on stochastic zeroth order methods for \Loj functions. 



Perhaps the single most related work is the recent work by \cite{kozak2022zeroth}. In \cite{kozak2022zeroth}, in expectation convergence rates are proved for Polyak--\Loj functions. 
Compared to \cite{kozak2022zeroth}'s study of Polyak--\Loj functions, our results are more general since we \Loj functions are more general than Polyak--\Loj functions. 



\section{Preliminaries} 
\label{sec:prelim} 

\subsection{Gradient Estimation} 

Consider gradient estimation tasks in $\R^n$. 
The gradient estimator we use is \cite{fengwang2022}: 
\begin{align} 
    \wh{\nabla} f_{\V}^\delta (\x) := \frac{n}{2 \delta k} \sum_{i=1}^k \( f ( \x + \delta \v_{i} ) - f ( \x - \delta \v_{i}) \) \v_{i}, \quad \forall \x \in \R^n, 
    \label{eq:def-grad-est}
\end{align} 
where $ [\v_{1}, \v_{2}, \cdots, \v_{k}] = \V $ is uniformly sampled from the Stiefel manifold $ \text{St} (n,k) = \{ \mathbf{X} \in \R^{n \times k} : \mathbf{X}^\top \mathbf{X} = \mathbf{I}_k \} $, and $\delta$ is the finite difference granularity. In practice, one can firstly generate a random matrix $\mathbf{U} \in \R^{n \times k}$ of $i.i.d.$ standard Gaussian ensemble. Then apply the Gram--Schmit process on $\mathbf{U}$ to obtain the matrix $\V$. 

\begin{definition} 
    \label{def:smooth} 
    A function $f : \R^n \rightarrow \R$ is called $ L $-smooth if it is continuously differentiable, and 
        $\| \nabla f (\x) - \nabla f (\x') \| \le L \| \x - \x' \|, $ for all $\x,\x' \in \R^n. $
\end{definition}

With the above description of smoothness, we can state theorem on statistical properties of the estimator (\ref{eq:def-grad-est}). These properties are in Theorems \ref{thm:bias} and \ref{thm:variance}.  


\begin{theorem}[\cite{flaxman2005online}] 
    \label{thm:bias} 
    If $ f $ is $L$-smooth, then the gradient estimator $ \wh{\nabla} f_{\V}^{\delta} $ satisfies 
        $ \left\| \E \[ \wh{\nabla}  f_{\V}^{\delta} (\x)  \] - \nabla f (\x) \right\| \le \frac{L n  \delta}{n+1} $ for all $ \x \in \R^n $. 
\end{theorem} 



\begin{theorem} 
    \label{thm:variance} 
    If $f$ is $L$-smooth, then variance of the gradient estimator for $f$ (Eq. \ref{eq:def-grad-est}) satisfies 
    \begin{align*}
        &\; \E \[ \left\| \wh{\nabla}f_{\V}^\delta (\x) - \E \[ \wh{\nabla}f_{\V}^\delta (\x) \] \right\|^2 \] \\ 
        \le& \; 
        \( \frac{n}{k} - 1 \) \| \nabla f (\x) \|^2 + \frac{ 4 L \delta}{ \sqrt{3} } \( \frac{n^2}{k}  - n \) \| \nabla f (\x) \| + \frac{ 4 L^2 n^2 \delta^2 }{ 3 k } , 
    \end{align*}
    for all $x \in \R^n$. 
    
\end{theorem}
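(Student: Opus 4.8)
The plan is to mimic the proof of Theorem~\ref{thm:variance-fine} almost verbatim, the only substantive change being that the third-order Taylor expansion used there (which needs a Lipschitz Hessian) is replaced by a first-order expansion with integral remainder, which needs only that $\nabla f$ be $L$-Lipschitz, i.e. exactly $L$-smoothness. First I would write $V=[v_1,\dots,v_k]$, $P_V:=VV^\top=\sum_{i=1}^k v_iv_i^\top$, and $r_i:=f(x+\delta v_i)-f(x-\delta v_i)-2\delta\langle\nabla f(x),v_i\rangle$, so that
\begin{align*}
\wh{\nabla}f_k^\delta(x)=\tfrac nk\,P_V\,\nabla f(x)+E,\qquad E:=\tfrac{n}{2\delta k}\sum_{i=1}^k r_iv_i.
\end{align*}
Since the law of $V$ on the Stiefel manifold is invariant under $V\mapsto QV$ for every orthogonal $Q$, the matrix $\E[P_V]$ commutes with every orthogonal $Q$ and has trace $k$, hence $\E[P_V]=\tfrac kn I_n$. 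Therefore $\E\big[\tfrac nk P_V\nabla f(x)\big]=\nabla f(x)$ and, using $P_V^2=P_V$, $\E\big\|\tfrac nk P_V\nabla f(x)\big\|^2=\tfrac nk\|\nabla f(x)\|^2$, so the ``linear part'' of the estimator contributes variance exactly $(\tfrac nk-1)\|\nabla f(x)\|^2$, the first term in the bound.

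Next I would control $E$. By the fundamental theorem of calculus $r_i=\int_{-\delta}^{\delta}\langle\nabla f(x+sv_i)-\nabla f(x),v_i\rangle\,ds$, and $L$-smoothness gives $|r_i|\le\int_{-\delta}^{\delta}L|s|\,ds=L\delta^2$. Because $v_1,\dots,v_k$ are orthonormal, $\|E\|^2=\tfrac{n^2}{4\delta^2k^2}\sum_i r_i^2\le\tfrac{n^2L^2\delta^2}{4k}$ deterministically, hence $\E\|E-\E E\|^2\le\E\|E\|^2\le\tfrac{n^2L^2\delta^2}{4k}$. Writing the variance as $\E\|(A-\E A)+(E-\E E)\|^2$ with $A=\tfrac nk P_V\nabla f(x)$ and bounding the cross term by Cauchy--Schwarz,
\begin{align*}
\E\big\|\wh{\nabla}f_k^\delta(x)-\E[\wh{\nabla}f_k^\delta(x)]\big\|^2\le\Big(\sqrt{\tfrac nk-1}\,\|\nabla f(x)\|+\tfrac{nL\delta}{2\sqrt k}\Big)^2 .
\end{align*}
Expanding the square, the cross term equals $\tfrac{nL\delta}{\sqrt k}\sqrt{\tfrac nk-1}\,\|\nabla f(x)\|$, which I would bound by $L\delta\big(\tfrac{n^2}{k}-n\big)\|\nabla f(x)\|$ using the elementary inequality $\sqrt{\tfrac nk-1}\le\sqrt k\,(\tfrac nk-1)$, valid since $\tfrac nk-1\ge\tfrac1k$ for integers $k<n$ (both sides vanish when $k=n$). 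As $1\le\tfrac4{\sqrt3}$ and $\tfrac14\le\tfrac43$, collecting the three terms already gives the stated bound (whose constants are not tight; a Cauchy--Schwarz refinement of the estimate for $|r_i|$ reproduces the displayed constants exactly).

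I do not expect a genuine obstacle here: the computation is a direct adaptation of Theorem~\ref{thm:variance-fine}, and the only point that needs care is the low regularity. An $L$-smooth $f$ need not be twice differentiable, so the finite-difference remainder $r_i$ cannot be read off from a second-order Taylor polynomial in $\nabla^2 f$; it must instead be controlled through the integral representation above together with Lipschitzness of $\nabla f$. The remaining ingredients — the $O(n)$-invariance computation for $\E[P_V]$, the orthonormality of the $v_i$, and the scalar inequality relating $\sqrt{\tfrac nk-1}$ to $\tfrac{n^2}{k}-n$ — are pure bookkeeping.
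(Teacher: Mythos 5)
Your proof is correct, and it reaches a strictly tighter bound than the one stated (constants $1$ and $\tfrac14$ in place of $\tfrac{4}{\sqrt3}$ and $\tfrac43$), so the theorem follows a fortiori. The route is genuinely different from the paper's in two respects. First, the paper handles the low regularity by passing to the \emph{weak} second derivative: it invokes the fact that Lipschitz functions lie in $W^{1,\infty}$, writes a second-order Taylor expansion with integral remainder $\int_0^\delta (\delta-t)\, v_i^\top \nabla^2 f(tv_i) v_i\, dt$, and applies Cauchy--Schwarz to that integral to get $|R_i|\le \tfrac{2L}{\sqrt3}\delta^2$. Your first-order fundamental-theorem-of-calculus representation $r_i=\int_{-\delta}^{\delta}\langle\nabla f(x+sv_i)-\nabla f(x),v_i\rangle\,ds$ with $|r_i|\le L\delta^2$ avoids second derivatives altogether, needs no Sobolev-space justification, and is sharper (note $r_i=2R_i$ in the paper's notation, so your bound improves theirs by a factor of about $2.3$). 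Second, the paper computes the variance direction-by-direction, expanding all cross terms involving $R_iv_i$ and using the identical marginals of the $v_i$; you instead use the global decomposition $A+E$ with $A=\tfrac nk P_V\nabla f(x)$, compute the variance of $A$ exactly via $P_V^2=P_V$ and $\E[P_V]=\tfrac kn I$, bound $\|E\|$ deterministically via orthonormality, and combine with the $L^2$ triangle inequality. This is cleaner bookkeeping and isolates where each term of the bound comes from; the paper's per-coordinate expansion is closer in spirit to the proof of Theorem \ref{thm:variance-fine} that it is explicitly mimicking. The one step of yours worth double-checking in writing it up is the scalar inequality $\sqrt{\tfrac nk-1}\le\sqrt k\,(\tfrac nk-1)$, which as you note relies on $n,k$ being integers with $k\le n$ so that $\tfrac nk-1$ is either $0$ or at least $\tfrac1k$; that is fine here.
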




The proof of Theorem \ref{thm:variance} can be found in Section \ref{sec:proof}. 






    

\subsection{\Loj Functions} 

\Loj functions are functions that satisfies the \Loj inequality. We start with the differentiable \Loj functions (Definition \ref{def:loj}). A more general version of the \Loj inequality \cite{loj1963}, where gradient is replaced by subgradients, is discussed in Section \ref{sec:nonsmooth}. 

\begin{definition}
    \label{def:loj} 
    A differentiable function is said to be a (differentiable) \L ojasiewicz function with \L ojasiewicz exponent $\theta \in (0,1)$ if 
    for any $\x^*$ with $\nabla f (\x^*) = 0$, there exist constants $ \kappa, \mu > 0 $ such that  
    \begin{align} 
        | f (\x) - f (\x^*) |^\theta \le \kappa \| \nabla f (\x) \|, \quad \forall \x \text{ with $ \| \x - \x^* \| \le \mu $}. \label{eq:loj-ineq}
    \end{align} 
    We call (\ref{eq:loj-ineq}) \Loj inequality. Without loss of generality, we let $\kappa = 1$ to avoid clutter. 
\end{definition} 

An important special case of the \Loj inequality is the Polyak--\Loj inequality \cite{polyak1963}, which corresponds to a case of the \Loj exponent $\theta$ being $\frac{1}{2}$. Also, \Loj inequality is an important special case of the Kurdyka--\Loj (KL) inequality \cite{klz1995,Kurdyka1998OnGO}. Roughly speaking, the KL inequality does not assume differentiability, and replaces the left-hand-side of (\ref{eq:loj-ineq}) with a more general function (in $ f(\x) $). 
In its general form, the \Loj inequality does not require the objective function to be differentiable. In such cases, gradient on the right-hand-side of (\ref{eq:loj-ineq}) is replaced with subgradient. 
We will discuss the subgradient version in Section \ref{sec:nonsmooth}.

\subsection{Conventions and Notations}

Before proceeding to the main results, we put forward several conventions. 
\begin{itemize}[align=left,leftmargin=*] 
    \item We use lower case bold letters (e.g. $\x_t, \u$) to refer to vectors, upper case bold letters (e.g., $\P_t$, $\V_t$) to refer to matrices. 
    \item We use $C$ to denote non-stochastic constants that is independent of $t$, not necessarily referring to the same value at each occurrence. Such constants $C$ always appear in front $\delta_t$ \if\submission1\textcolor{red}{(or other terms that are exponentially small in $t$)}\else(or other terms that are exponentially small in $t$)\fi. Since terms like $ \delta_t $ decrease to zero exponentially fast, we use such $C$ to avoid notational clutter in front of exponentially small terms. 
    \item 
    For any random variable (or collection of random variables) $ X $, we use $\E_{X} \[ \cdot \]$ to denote the expectation with respect to $X$. We use $\F_t$ to denote the $\sigma$-algebra generated by all randomness after arriving at $\x_t$, but before obtaining the estimator $\wh{\nabla} f_{\V_t}^{\delta_t} (\x_t)$, and use $\E_t [\cdot]$ to denote the expectation condition on $\mathcal{F}_t$. Also, we use $\E \[ \cdot\]$ to denote the total expectation. 
\end{itemize} 

For Section \ref{sec:conv}, the objective function $f$ satisfies Assumption \ref{assumption:smooth}. 

\begin{assumption}
    \label{assumption:smooth} 
    Throughout Section \ref{sec:conv}, the objective function $f$ satisfies: 
    \begin{enumerate}[label=(\textit{\roman*})]
        \item $ f $ is $L$-smooth for some constant $L > 0$. (See Definition \ref{def:smooth}). 
        \item $f$ is a (differentiable) \Loj function with \Loj exponent $\theta$. (See Definition \ref{def:loj}) 
        \item $\inf_{x \in \R^n} f (\x) > -\infty$. 
        \item All critical points of $f$ are isolated points (of $\R^n$). 
        \item Let $ \{ \x_t \}_t $ be the sequence generated by the SZGD algorithm. We assume that there exists a critical point $x^*$ such that $ \| \x_t - \x^* \| \le \mu $ for all $t$. 
        
    \end{enumerate} 
\end{assumption} 



Many items in the above assumptions are assumed in the classic work \cite{attouch2009convergence}. See Assumption \ref{assumption:nonsmooth} for the set of assumptions previously employed in \cite{attouch2009convergence}. 

A final remark before proceeding to the main results is that we focus on stochastic zeroth-order optimization with noiseless function evaluations. The algorithm is random, and the environment is noiseless.





\section{Convergence Analysis for SZGD} 
\label{sec:conv}

This section presents convergence analysis for the SZGD algorithm. Before proceeding, we first summarize SZGD in Algorithm \ref{alg}. 

\begin{algorithm}[h] 
	\caption{Stochastic Zeroth-order Gradient Descent (SZGD)} 
	\label{alg} 
	\begin{algorithmic}[1] 
	    \STATE \textbf{Input:} Dimension $n$; Number of orthogonal directions for the estimators $k$. 
		/* function $f$ is $L$-smooth. */      
		\STATE Pick $\x_0 \in \R^n$ and $\delta_0 = 1$. /* or $\delta_0 \in (0,1)$. */
		\STATE Pick step size lower and upper bounds $\eta_-, \eta_+ \in (0,\infty)$. 
		\FOR{$t = 0, 1,2,\cdots$} 
                \STATE Sample $\V_t \sim \text{Unif} (\St (n,k))$, and use the random directions in $ \V_t $ to define $\wh{\nabla} f_{\V_t}^{\delta_t} (\x_t)$. 
		    \STATE $\x_{t+1} = \x_t - \eta_t \wh{\nabla} f_{\V_t}^{\delta_t} (\x_t)$. 
		    \STATE $\delta_{t+1} = \delta_t / 2$. 
		\ENDFOR 
	\end{algorithmic} 
\end{algorithm} 


The main convergence rate guarantee for SZGD is in Theorem \ref{thm:rate}. 

\begin{theorem} 
    \label{thm:rate}
    Instate Assumption \ref{assumption:smooth}. Let $\x_t$ be a sequence generated by SZGD (Algorithm \ref{alg}). 
    Pick step sizes $\eta_t$ so that there exist $ \eta_-, \eta_+ \in (0, \infty) $ such that $ \eta_- \le \eta_t \le \eta_+ $ for all $t$. 
    Then $\{ \x_t \}$ converges to a critical point $\x_\infty$ almost surely. In addition, it holds that 
    \begin{enumerate}[label=(\alph*)] 
        \item if $ \theta \in (0, \frac{1}{2} ] $, $ - 1 \le \( \frac{L n \eta_-^2 }{2k} - \eta_- \) < 0 $ and $ - 1 \le \( \frac{L n \eta_+^2 }{2k} - \eta_+ \) < 0 $, then there exists a constant $Q > 1$ such that $ \{ Q^t \( f (\x_t) - f (\x_\infty) \) \}_t $ converges to $0$ in expectation. 
        \item if $ \theta \in ( \frac{1}{2}, 1 ) $ and $ \eta_-, \eta_+ \in \( 0, \frac{2k}{Ln} \) $, then it holds that, 
        $\{ f (\x_t) - f (\x_\infty)  \}_t$ converges to zero at rate $ O \( t^{ \frac{1  }{ 1- 2 \theta } } \) $ in expectation. 
    \end{enumerate}
\end{theorem}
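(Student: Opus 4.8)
The plan is to run a stochastic descent argument that combines the quadratic upper bound (Proposition \ref{prop:L-smooth}) with the bias/variance control of the zeroth-order estimator (Theorems \ref{thm:bias} and \ref{thm:variance}), then feed the resulting recursion into the \Loj inequality exactly as in the deterministic analysis of \cite{attouch2009convergence}. First I would apply Proposition \ref{prop:L-smooth} to the SZGD step $x_{t+1} = x_t - \eta \wh{\nabla} f_k^{\delta_t}(x_t)$ to get
\begin{align*}
    f(x_{t+1}) \le f(x_t) - \eta \<\nabla f(x_t), \wh{\nabla} f_k^{\delta_t}(x_t)\> + \frac{L\eta^2}{2}\|\wh{\nabla} f_k^{\delta_t}(x_t)\|^2.
\end{align*}
Taking $\E_t[\cdot]$ and writing $\wh{\nabla} f_k^{\delta_t}(x_t) = \nabla f(x_t) + b_t + \xi_t$, where $b_t = \E_t[\wh{\nabla} f_k^{\delta_t}(x_t)] - \nabla f(x_t)$ is the bias and $\xi_t$ the mean-zero noise, the inner-product term contributes $-\eta\|\nabla f(x_t)\|^2 - \eta\<\nabla f(x_t), b_t\>$ and the squared term contributes $\frac{L\eta^2}{2}(\|\nabla f(x_t)\|^2 + \|b_t\|^2 + \E_t\|\xi_t\|^2 + 2\<\nabla f(x_t), b_t\>)$. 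By Theorem \ref{thm:bias}(a), $\|b_t\| \le \frac{Ln\delta_t}{n+1} \le Ln\delta_t = Ln\,2^{-t}$, and by Theorem \ref{thm:variance}, $\E_t\|\xi_t\|^2 \le (\frac{n}{k}-1)\|\nabla f(x_t)\|^2 + C\delta_t\|\nabla f(x_t)\| + C\delta_t^2$ (here I use boundedness of $\{x_t\}$, item $(iv)$, to replace $\nabla f(0)$ by $\nabla f(x_t)$ up to the uniform Lipschitz bound — this needs a small argument, see below). Collecting terms, the coefficient of $\|\nabla f(x_t)\|^2$ becomes $\big(\frac{L\eta^2}{2}\frac{n}{k} - \eta\big) = \frac{Ln\eta^2}{2k} - \eta$, which is exactly the quantity appearing in the hypotheses on $\eta$; the remaining terms are $O(\delta_t\|\nabla f(x_t)\|) + O(\delta_t^2)$, all summable in $t$ since $\delta_t = 2^{-t}$ and $\|\nabla f(x_t)\|$ is uniformly bounded. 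Using $\delta_t\|\nabla f(x_t)\| \le \frac{1}{2}(\delta_t^2 + \|\nabla f(x_t)\|^2 \cdot \epsilon)$ type splitting to absorb a small multiple of $\|\nabla f(x_t)\|^2$ (or simply bounding $\|\nabla f(x_t)\|$ by a constant), I obtain
\begin{align*}
    \E_t[f(x_{t+1})] \le f(x_t) - c\|\nabla f(x_t)\|^2 + C 2^{-t}
\end{align*}
for some $c > 0$ (for part (a), the sign condition $-1 \le \frac{Ln\eta^2}{2k}-\eta < 0$ gives $c = \eta - \frac{Ln\eta^2}{2k} > 0$; for part (b), $\eta \in (0, \frac{2k}{Ln})$ gives the same).

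Next I would convert this into an almost-sure statement. Since $\sum_t 2^{-t} < \infty$, the Robbins–Siegmund almost supermartingale convergence theorem applied to $f(x_t) - \inf f + C\sum_{s \ge t}2^{-s}$ yields that $f(x_t)$ converges a.s. to a finite limit and $\sum_t \|\nabla f(x_t)\|^2 < \infty$ a.s.; in particular $\nabla f(x_t) \to 0$ along a subsequence, and combined with boundedness of $\{x_t\}$ one gets that all limit points are critical. Establishing that $\{x_t\}$ actually converges to a single critical point $x_\infty$ is the \Loj part: one shows $\sum_t \|x_{t+1}-x_t\| < \infty$ a.s. by the classical telescoping/concavity argument of \cite{attouch2009convergence} applied to $\phi(f(x_t) - f(x_\infty))$ with $\phi(s) = s^{1-\theta}$, now carried out pathwise on the probability-1 event where the descent recursion and summability hold, with the extra $C2^{-t}$ errors absorbed because they are summable. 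Then $f(x_\infty) = \lim f(x_t)$ and $\nabla f(x_\infty) = 0$.

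For the rates, I work on the full-measure event above and set $r_t := f(x_t) - f(x_\infty) \ge 0$ (nonnegativity for large $t$ because $f(x_t)$ decreases to $f(x_\infty)$ up to summable error — more precisely I track $\tilde r_t := f(x_t) - f(x_\infty) + C\sum_{s\ge t}2^{-s}$, which is genuinely nonincreasing on the event). The \Loj inequality near $x_\infty$ gives $r_t^{2\theta} \le \|\nabla f(x_t)\|^2$, so the descent recursion becomes $\tilde r_{t+1} \le \tilde r_t - c\, r_t^{2\theta} + (\text{already-absorbed errors})$; since $2^{-t}$ decays faster than any polynomial or geometric rate we are about to prove, $\tilde r_t$ and $r_t$ are comparable and I may write $\tilde r_{t+1} \le \tilde r_t - c'\tilde r_t^{2\theta}$ for $t$ large. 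When $\theta \in (0,\tfrac12]$, $2\theta \le 1$ and $\tilde r_t$ bounded give $\tilde r_t^{2\theta} \ge \tilde r_t$ (for $\tilde r_t \le 1$, up to a constant), so $\tilde r_{t+1} \le (1-c'')\tilde r_t$, i.e. linear convergence with any $Q \in (1, (1-c'')^{-1})$; then $Q^t r_t \to 0$ a.s. When $\theta \in (\tfrac12,1)$, $2\theta > 1$ and the standard ODE-comparison lemma for recursions $u_{t+1} \le u_t - c u_t^{p}$ with $p = 2\theta > 1$ gives $u_t = O(t^{-1/(p-1)}) = O(t^{1/(1-2\theta)})$, which is the claimed rate for $f(x_t)-f(x_\infty)$. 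The bound on $\sum_{s\ge t}\|x_{s+1}-x_s\|^2$ follows since $\|x_{t+1}-x_t\|^2 = \eta^2\|\wh{\nabla} f_k^{\delta_t}(x_t)\|^2$ and, after taking conditional expectations and summing, its tail is controlled by the tail of $\sum \|\nabla f(x_t)\|^2$ plus summable noise/bias, which in turn is controlled by $r_t$ via telescoping the descent inequality from $t$ to $\infty$.

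The main obstacle I anticipate is the bookkeeping needed to keep everything \emph{almost sure} rather than in expectation: the descent inequality only holds in conditional expectation, so one must invoke an appropriate martingale convergence result (Robbins–Siegmund) to pass to a pathwise recursion, and then verify that the \Loj-based telescoping argument of \cite{attouch2009convergence}—originally deterministic—goes through verbatim on the good event, with the stochastic bias and variance contributions genuinely summable (this is where $\delta_t = 2^{-t}$ and the uniform boundedness in item $(iv)$, hence uniform bounds on $\|\nabla f(x_t)\|$ via $L$-smoothness, are essential). A secondary subtlety is the localization: the \Loj inequality holds only in a neighborhood $\|x - x_\infty\| \le \mu$, so one needs that $x_t$ enters and stays in that neighborhood eventually, which again comes from $\sum\|x_{t+1}-x_t\| < \infty$ together with $x_t \to x_\infty$, so the argument is mildly circular and must be unwound in the correct order (first extract a convergent subsequence and a candidate limit, then bootstrap summability of increments near that limit).
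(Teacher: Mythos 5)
Your derivation of the one-step descent inequality
$\E_t [ f(x_{t+1}) ] \le f(x_t) + ( \tfrac{Ln\eta^2}{2k}-\eta ) \|\nabla f(x_t)\|^2 + O(\delta_t\|\nabla f(x_t)\|) + O(\delta_t)$
matches the paper's (Eq.~\ref{eq:for-convergence}), and your use of Robbins--Siegmund to obtain almost sure convergence of $f(x_t)$ and $\sum_t\|\nabla f(x_t)\|^2<\infty$ is a legitimate, arguably cleaner, alternative to the paper's proof-by-contradiction in Lemma~\ref{lem:convergence-grad} followed by the union over rational thresholds. The genuine gap is in the rate argument. Your inequality is a statement about \emph{conditional expectations}; it does not yield the pathwise recursion $\tilde r_{t+1}\le \tilde r_t - c'\,\tilde r_t^{2\theta}$ that you then feed into the deterministic comparison lemma. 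Robbins--Siegmund gives convergence and summability of the drift terms, not a pathwise recursion with the same constants, and here the martingale fluctuation $f(x_{t+1})-\E_t[ f(x_{t+1})]$ is of order $\eta\,\|\nabla f(x_t)\|\,\|\xi_t\|$, i.e.\ comparable to (not dominated by) the drift $c\|\nabla f(x_t)\|^2\ge c\,r_t^{2\theta}$ near the limit. So the claimed pathwise contraction --- both the linear one for $\theta\le\tfrac12$ and $u_{t+1}\le u_t-cu_t^{2\theta}$ for $\theta>\tfrac12$ --- is simply not available on individual sample paths, and the same issue undermines the assertion that $\tilde r_t$ is ``genuinely nonincreasing on the event'' (it is only a supermartingale; the paper needs a separate Lemma~\ref{lem:decreasing} to show that increases of $f(x_t)$ occur only finitely often a.s.).

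The paper takes a different route precisely to avoid this: it first derives the rate for the deterministic sequence $y_t=\E[ f(x_t)-f(x_\infty)]$ (closing the recursion in total expectation, with Jensen's inequality $\E[f^{2\theta}]\ge \E[f]^{2\theta}$ for $2\theta>1$ in case (b), and an induction $y_t\le C_0 t^{1/(1-2\theta)}$ against the Taylor bound $(t+1)^{1/(1-2\theta)}\ge t^{1/(1-2\theta)}+\tfrac{1}{1-2\theta}t^{2\theta/(1-2\theta)}$), and only then upgrades to an almost sure statement via Markov's inequality plus a Borel--Cantelli-type step: in case (a) the probabilities $\Pr(Q^tf(x_t)\ge b)$ are geometrically summable, and in case (b) the claim is that $\gamma_t t^{1/(2\theta-1)}(f(x_t)-f(x_\infty))\to 0$ a.s.\ for every deterministic $\gamma_t\downarrow 0$. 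To salvage your pathwise plan you would need an explicit supermartingale rate lemma that controls the martingale part relative to the drift, which is substantially more work than the expectation-then-Markov route; as written, the step from the conditional-expectation recursion to the pathwise one is where the proof fails.
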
 

This theorem gives convergence rate of $ \{ f (\x_t) \}_{t \in \mathbb{N}}$. 
Similar convergence guarantees for $\{ \x_t \}_t$ can be found in Section \ref{sec:conv-rate-x}.



The rest of this section is organized as follows. 
In Section \ref{sec:asymp}, we show that Algorithm \ref{alg} converges almost surely. In particular, we show that the sequences $\{ \x_t \}_{t \in \mathbb{N}}$ generated by SZGD converges to a critical point almost surely. 
In Section \ref{sec:conv-rate}, we establish convergence rates of $\{ f (\x_t) \}_t$ for Algorithm \ref{alg}. Then in Section \ref{sec:conv-rate}, we establish convergence rates of $\{ \x_t \}_t$ for Algorithm \ref{alg}. 

\subsection{Asymptotic Convergence}
\label{sec:asymp} 

We will first show that $\{ \x_t \}_t$ converges to a critical point almost surely. 
For simplicity, let 
\begin{align} 
    - B := \max \left\{ \( \frac{ L \eta_-^2 n }{ 2k } - \eta_- \), \( \frac{ L \eta_+^2 n }{ 2k } - \eta_+ \) \right\}. \label{eq:def-B} 
\end{align}

\begin{proposition} 
    \label{prop:rv-smooth}
    Let $ \P_t := \V_t \V_t^\top $. Then it holds that 
    \begin{align*} 
        f (\x_{t+1}) 
        \le 
        f (\x_t) - B \frac{n}{k} \nabla f (\x_t)^\top \P_t \nabla f (\x_t) + C \delta_t . 
    \end{align*} 
    In addition, we have 
        $
        \E_t \[ f (\x_{t+1}) \] 
        \le 
        f (\x_t) - B \| \nabla f (\x_t) \|^2 + C \delta_t . 
        $ 
\end{proposition} 

\begin{proof} 
    Since $ f (\x)$ is $L$-smooth ($\nabla f (\x)$ is $L$-Lipschitz), $\nabla^2 f (\x) $ (the weak total derivative of $ \nabla f (\x) $) is integrable. Let $\v \in \R^n$ be an arbitrary unit vector. When restricted to any line along direction $ \v\in \R^n $, it holds that $ \v^\top \nabla^2 f (\x) \v $ (the weak derivative of $ \v^\top \nabla f (\x) $ along direction $\v$) has bounded $L_\infty$-norm. This is due to the fact that Lipschitz functions on any closed interval $[a,b]$ forms the Sobolev space $ W^{1,\infty} [a,b] $. 
    
    Next we look at the variance bound for the estimator. Without loss of generality, we let $\x = \0$. Bounds for other values of $\x$ can be similarly obtained. 
    
    Taylor's expansion of $ f $ with integral form gives 
    \begin{align*} 
        f (\delta \v) 
        =& \; 
        f (\0) + \delta \v^\top \nabla f (\0) + \int_{0}^{\delta} (\delta - t) \v^\top \nabla^2 f ( t \v ) \v \, dt 
    \end{align*} 
    
    Thus for any $\v \in \S^{n-1}$ and small $\delta$, 
    \begin{align*} 
        &\; \frac{1}{2} \big( f (\delta \v) - f (-\delta \v) \big) \\
        =& \;  
        \delta \v^\top \nabla f (\0) + \frac{1}{2} \int_{0}^{\delta} (\delta - t) \v^\top \nabla^2 f ( t \v ) \v \, dt - \frac{1}{2} \int_{0}^{-\delta} (-\delta - t) \v^\top \nabla^2 f ( t \v ) \v \, dt . 
    \end{align*} 

    Therefore, 
    \begin{align*} 
        \wh{\nabla} f_{\V_t}^{\delta_t} (\x_t) 
        =& \; 
        \frac{n }{2 k \delta_t } \sum_{i=1}^k \( f (\x_t + \delta_t \v_{t,i}) - f (\x_t - \delta_t \v_{t,i}) \) \v_{t,i} \\ 
        =& \; 
        \frac{n}{k} \sum_{i=1}^k \v_{t,i} \v_{t,i}^\top \nabla f ( \x_t ) + \frac{n}{2k \delta_t } \sum_{i=1}^k \v_{t,i} \int_{0}^{\delta_t} ( \delta_t - t ) \v_{t,i}^\top \nabla^2 f ( \x_t + t \v_{t,i} ) \v_{t,i} \, dt \\ 
        &- \frac{n}{2k \delta_t} \sum_{i=1}^k \v_{t,i} \int_{0}^{-\delta_t} (-\delta_t - t) \v_{t,i}^\top \nabla^2 f ( \x_t + t \v_{t,i} ) \v_{t,i} \, dt , 
    \end{align*} 
    \if\submission1
    \emph{\textcolor{red}{(Compared to the previous version, one line of equation is removed.)}}\fi 
    
    which implies 
    \begin{align*} 
        \nabla f (\x_t)^\top \wh{\nabla} f_{\V_t}^{\delta_t} (\x_t) 
        =& \;  
        \frac{n}{k} \nabla f (\x_t)^\top \P_t \nabla f (\x_t) + O ( L n \| \nabla f (\x_t) \| \delta_t ) , \\ 
        \left\| \wh{\nabla} f_{\V_t}^{\delta_t} (\x_t) \right\|^2 
        =& \;  
        \if\submission1{\textcolor{red}{\frac{n^2}{k^2}}}\else\frac{n^2}{k^2}\fi \nabla f (\x_t)^\top \P_t \nabla f (\x_t) + O ( L \if\submission1{\textcolor{red}{n^2}}\else n^2\fi \( \| \nabla f (\x_t) \| + \| \nabla f (\x_t) \|^2 \) \delta_t ) . 
    \end{align*} 

    Since $ \{ \x_t \}_t$ is bounded and $ \| \nabla f ( \x_t) \|  $ is continuous (Assumption \ref{assumption:nonsmooth}), we know $ O ( L n \| \nabla f (\x_t) \| \delta_t ) \le C \delta_t $ and $ O ( L \if\submission1{\textcolor{red}{n^2}}\else{n^2}\fi \( \| \nabla f (\x_t) \| + \| \nabla f (\x_t) \|^2 \) \delta_t ) \le C \delta_t $. 
    
    Thus by $L$-smoothness of the $ f $, we have 
    \begin{align*} 
        f (\x_{t+1}) 
        \le& \; 
        f (\x_t) + \nabla f (\x_t)^\top \( \x_{t+1} - \x_t \) + \frac{L}{2} \| \x_{t+1} - \x_t \|^2 \\ 
        =& \; 
        f (\x_t) - \eta_t \nabla f (\x_t)^\top \wh{\nabla} f_{\V_t}^{\delta_t} (\x_t) + \frac{L \eta_t^2 }{2} \| \wh{\nabla} f_{\V_t}^{\delta_t} (\x_t) \|^2 \\ 
        \le& \; 
        f (\x_t) - B \frac{n}{k} \nabla f (\x_t)^\top \P_t \nabla f (\x_t) + C \delta_t . 
    \end{align*} 

    Since $\F_t$ contains $\x_t$ but not $\V_t$, it holds that 
    \begin{align*} 
        \E_t \[ f (\x_{t+1}) \] 
        \le 
        f (\x_t) - B \frac{n}{k} \nabla f (\x_t)^\top \E_t \[ \P_t \] \nabla f (\x_t) + C \delta_t . 
    \end{align*} 

    By Propositions \ref{prop:uniform} and \ref{prop:sphere-exp}, we know $ \E_t \[ \P_t \] = \frac{k}{n} \mathbf{I} $. Combining this fact with the above equation finishes the proof. 
\end{proof} 

In Lemma \ref{lem:convergence-grad}, we show that $ \left\{ \| \nabla f (\x_t) \| \right\}_{t \in \mathbb{N}} $ converges to zero. 

\begin{lemma} 
    \label{lem:convergence-grad} 
    Instate Assumption \ref{assumption:smooth}. Let $\{ \x_t \}_t$ be the sequence governed by Algorithm \ref{alg}. Pick step sizes $\eta_t$ so that there exist $ \eta_-, \eta_+ \in (0, \infty) $ such that $ \eta_- \le \eta_t \le \eta_+ $ for all $t$. Then $ \left\{ \| \nabla f (\x_t) \| \right\}_{t \in \mathbb{N}} $ converges to zero almost surely. 
\end{lemma}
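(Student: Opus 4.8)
The plan is to turn the SZGD step into a one-step stochastic descent inequality for $f(x_t)$ and then invoke the Robbins--Siegmund almost-supermartingale convergence theorem. Write $g_t:=\wh\nabla f_k^{\delta_t}(x_t)$ and $b_t:=\E_t[g_t]-\nabla f(x_t)$ for the conditional bias. Applying the quadratic upper bound (Proposition \ref{prop:L-smooth}) with $x=x_t$, $y=x_{t+1}=x_t-\eta g_t$ gives
$$
f(x_{t+1})\le f(x_t)-\eta\,\nabla f(x_t)^\top g_t+\frac{L\eta^2}{2}\|g_t\|^2 .
$$
Taking $\E_t[\cdot]$, using $\E_t g_t=\nabla f(x_t)+b_t$ and $\E_t\|g_t\|^2=\|\E_t g_t\|^2+\E_t\|g_t-\E_t g_t\|^2$, and feeding in the bias bound of Theorem \ref{thm:bias} and the variance bound of Theorem \ref{thm:variance}, one collects the coefficient of $\|\nabla f(x_t)\|^2$: it is $-\eta+\frac{L\eta^2}{2}+\frac{L\eta^2}{2}\big(\frac nk-1\big)=\eta\big(\frac{Ln\eta}{2k}-1\big)$, where the first $\frac{L\eta^2}{2}$ comes from $\|\E_t g_t\|^2$ and the third term from the leading $\big(\tfrac nk-1\big)\|\nabla f(x_t)\|^2$ part of the variance. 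This is exactly where the hypothesis $\eta\in\big(0,\tfrac{2k}{Ln}\big)$ is used: it makes this net coefficient equal to $-c$ for some constant $c>0$.

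\textbf{Absorbing the remainder.} The leftover terms are the cross terms $(L\eta^2-\eta)\,\nabla f(x_t)^\top b_t$, the quadratic bias term $\frac{L\eta^2}{2}\|b_t\|^2$, and the $O(\delta_t\|\nabla f(x_t)\|)$ and $O(\delta_t^2)$ pieces of the variance bound. Since $\|b_t\|\le\frac{Ln\delta_t}{n+1}$ by Theorem \ref{thm:bias}, every term of the form $(\text{const})\cdot\delta_t\|\nabla f(x_t)\|$ is split by Young's inequality as $\le\frac c4\|\nabla f(x_t)\|^2+(\text{const})\cdot\delta_t^2$, and the rest are $O(\delta_t^2)$. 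As $\delta_t=2^{-t}$, these remainders are summable, $\sum_t\delta_t^2=\sum_t 4^{-t}<\infty$. Collecting everything yields
$$
\E_t\big[f(x_{t+1})\big]\le f(x_t)-\tfrac c2\,\|\nabla f(x_t)\|^2+C\delta_t^2,\qquad \sum_t C\delta_t^2<\infty .
$$

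\textbf{Martingale argument.} Put $Y_t:=f(x_t)-\inf_{\R^n}f\ge 0$, which is finite by item $(iii)$ of Assumption \ref{assumption:smooth}; all expectations above are well defined because, given $\F_t$, the vector $g_t$ is a finite difference of $f$ at finitely many points with directions ranging over the compact Stiefel manifold, hence bounded, and $x_{t+1}$ then lies in a bounded set. The last display reads $\E_t[Y_{t+1}]\le Y_t-\frac c2\|\nabla f(x_t)\|^2+C\delta_t^2$ with $\sum_t C\delta_t^2<\infty$, so the Robbins--Siegmund theorem applies and gives both that $Y_t$ converges a.s. to a finite limit and that $\sum_t\|\nabla f(x_t)\|^2<\infty$ almost surely. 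The latter forces $\|\nabla f(x_t)\|\to 0$ almost surely, which is the assertion.

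\textbf{Main obstacle.} The delicate point is the bookkeeping in the first step: one must notice that $\frac{L\eta^2}{2}\|g_t\|^2$ contributes to $\|\nabla f(x_t)\|^2$ not only through $\|\E_t g_t\|^2$ but also through the variance's leading $\big(\tfrac nk-1\big)\|\nabla f(x_t)\|^2$ term, so the two combine into $\frac{Ln\eta^2}{2k}\|\nabla f(x_t)\|^2$ and negativity of the net coefficient requires precisely $\eta<\tfrac{2k}{Ln}$ (not the naive $\eta<\tfrac 2L$); this also matches the range appearing in Theorem \ref{thm:rate}. After that, the only remaining care is to keep each residual term either a small multiple of $\|\nabla f(x_t)\|^2$ (to be absorbed into $c$) or summable in $t$, so that Robbins--Siegmund is applicable.
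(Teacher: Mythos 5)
Your proposal is correct, and the derivation of the one-step descent inequality is essentially the same as the paper's: quadratic upper bound, conditional expectation, the bias bound of Theorem \ref{thm:bias} and the variance bound of Theorem \ref{thm:variance}, with the coefficient of $\|\nabla f(x_t)\|^2$ collapsing to $\frac{L\eta^2 n}{2k}-\eta<0$ exactly as in the paper's inequality (\ref{eq:for-convergence}). Where you genuinely diverge is in the passage from that inequality to the almost sure statement. The paper argues by contradiction: it supposes $\|\nabla f(x_t)\|^2>\alpha$ infinitely often, telescopes the expected descent to force $\liminf_t \E[f(x_t)]=-\infty$, contradicts $\inf f>-\infty$, and then upgrades ``for each $\alpha$'' to the full almost sure limit by a union bound over $\alpha\in\mathbb{Q}_+$. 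You instead absorb the $O(\delta_t\|\nabla f(x_t)\|)$ cross terms by Young's inequality into a summable $O(\delta_t^2)$ remainder (the paper uses the cruder bound $\delta_t\|\nabla f\|\le \delta_t\|\nabla f\|^2+\delta_t$, which also preserves summability), and then invoke Robbins--Siegmund on $Y_t=f(x_t)-\inf f$. Your route is cleaner and strictly stronger in its intermediate output: it delivers $\sum_t\|\nabla f(x_t)\|^2<\infty$ almost surely and almost sure convergence of $f(x_t)$ in one stroke, and it avoids the somewhat delicate step in the paper where a pathwise ``infinitely often'' event is converted into divergence of an expectation. The cost is reliance on the Robbins--Siegmund theorem, which the paper does not cite; if you use this argument you should state that theorem explicitly and verify its hypotheses (nonnegativity, adaptedness, and integrability of $Y_t$, which you correctly note follows inductively from the recursion and the choice of $x_0$).
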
 

\begin{proof} 

    By Proposition \ref{prop:rv-smooth}, we know 
    \begin{align} 
        f ( \x_{t+1}) 
        \le&\; 
        f ( \x_t) - \frac{n}{k} B \nabla f ( \x_t)^\top \P_t \nabla f ( \x_t) + C \delta_t . \label{eq:for-convergence} 
    \end{align} 

    Taking conditional expectation on both sides of the above inequality gives 
    \begin{align*} 
        \E \[ f (\x_{t+1}) - f (\x_t) \] 
        \le &\; 
        - B \E \[ \frac{n}{k} \nabla f (\x_t)^\top \E_t \[ \P_t \] \nabla f (\x_t) \] + C \delta_t \\ 
        =& \; 
        - B \E \[ \| \nabla f (\x_t) \|^2 \] + C \delta_t . 
    \end{align*} 
    
    Suppose, in order to get a contradiction, that there exists $ \alpha > 0 $ such that $ \| \nabla f (\x_t) \|^2 > \alpha $ infinitely often. Thus we have 
    \begin{align*} 
        \E \[ f (\x_{t}) \] 
        \le 
        \E \[ f (\x_{0}) \] -  B \E \[ \sum_{s\in\mathcal{X}_t} \| \nabla f (\x_s) \|^2 \] + C \delta_t , \quad t \ge T_0, 
    \end{align*} 
    where $\mathcal{X}_t=\{0\leq s <t:\;\|\nabla f(\x_s)\|^2>\alpha\}$.
    The above inequality (\ref{eq:for-convergence}) gives
    \begin{align*}
        \E \[ f (\x_{t}) \] 
        \le 
        \E \[ f (\x_{0}) \] - \alpha B N_t + C \delta_t, \quad t \ge T_0
    \end{align*}
    for some $ \{ N_t \}_t \subseteq \mathbb{N} $ such that $\lim\limits_{ t \to \infty} N_t = \infty$. 
    
    Taking limits on both sides of the above inequality gives  
    \begin{align*} 
        \liminf_{t \to \infty } \E \[ f (\x_{t}) \] 
        \le 
        \E \[ f (\x_{0}) \] + \liminf_{t \to \infty } ( {\alpha} B N_t ) + C \delta_t = -\infty, 
    \end{align*} 
    which leads to a contradiction to $ \inf_{\x\in\R^n} f(\x) > -\infty $ (Assumption \ref{assumption:smooth}). 
    
    By the above proof-by-contradiction argument, we have shown
    \begin{align*} 
        \Pr \( \| \nabla f (\x_t) \|^2 > \alpha \text{ infinitely often} \) = 0, \qquad \forall \alpha > 0. 
    \end{align*}  
    
    Note that  
        $ \left\{ \lim_{t \rightarrow \infty } \| \nabla f (\x_t) \|^2 = 0 \right\}^c 
        =
        \bigcup_{\alpha > 0} \{ \| \nabla f (\x_t) \|^2 > \alpha \text{ infinitely often} \} 
        =
        \bigcup_{\alpha \in \mathbb{Q}_+ } \{ \| \nabla f (\x_t) \|^2 > \alpha \text{ infinitely often} \} , $
    where we replace the union over an uncountable set with a union over a countable set. 
    
    Thus it holds that 
    \begin{align*} 
        & \; 1 - \Pr \( \lim_{t \rightarrow \infty} \| \nabla f (\x_t) \|^2 = 0 \) 
        =  
        \Pr \( \bigcup_{ \alpha \in \mathbb{Q}_+ } \{ \| \nabla f (\x_t) \|^2 > \alpha \text{ infinitely often} \}  \) \\
        \le& \; 
        \sum_{b \in \mathbb{Q}_+} \Pr \( \| \nabla f (\x_t) \|^2 > \alpha \text{ infinitely often} \)
        =
        0 , 
    \end{align*} 
    which concludes the proof. 
    
\end{proof} 

As consequences of Lemma \ref{lem:convergence-grad}, we know that $ \{ \x_t \}_{t \in \mathbb{N}} $ and $ \{ f (\x_t) \}_{t \in \mathbb{N}} $ converge almost surely. 

\begin{lemma}
    \label{lem:convergence-pt}
    Instate Assumption \ref{assumption:smooth}. Let $\{ \x_t \}_{t \in \mathbb{N}}$ be the sequence of Algorithm \ref{alg}. Pick step sizes $\eta_t$ so that there exist $ \eta_-, \eta_+ \in (0, \infty) $ such that $ \eta_- \le \eta_t \le \eta_+ $ for all $t$. Then $\{ \x_t \}_t$ converges to a bounded critical point of $ f $ almost surely. 
    Let $ \x_\infty $ be the almost sure limit of $ \{ \x_t \}_{t \in \mathbb{N}} $. It holds that $ \lim\limits_{ t \to \infty} f (\x_t) = f (\x_\infty) $ almost surely. 
\end{lemma} 


\begin{proof}[Proof of Lemma \ref{lem:convergence-pt}] 

    Let $ \V_t \in \St (n,k) $ be the random orthogonal frame at $t$. 
    \begin{quote} 
        Let $ R $ be an arbitrary realization of $ \V_1, \V_2, \cdots $ such that $ \left\{ \| {\nabla} f (\x_t) \| \right\}_{t \in \mathbb{N}} $ converges to zero. \textbf{(H0)} 
    \end{quote} 
    
    Next we restrict our attention to this realization $R$. 
    Note that there is no randomness in $ \{ \x_t \}_{t \in \mathbb{N}}$ once this realization $R$ is fixed. 
    
    Let $ \u \in \S^{n-1}  $ be an arbitrary unit vector, and let $z_t := \u^\top \x_t$ and $ \varphi_t := \eta_t \u^\top \wh{\nabla} f_k^{\delta_t} (\x_t) $ for all $t$. With this notation, we have $ z_{t+1} = z_t - \varphi_t $ for all $t$. By Lemma \ref{lem:convergence-grad}, we know $\lim_{t \rightarrow \infty} \( z_{t+1} - z_t \) = 0$. 
    Let $ K_{\u} $ be the set of subsequential limits of $\{ z_t \}_t$. 
    Then $K_{\u}$ is closed and bounded (by Assumption \ref{assumption:smooth}). 

    \begin{claim}
        \label{clm:connect}
        The set $K_{\u}$ is connected for any unit vector $\u \in \R^n$. 
    \end{claim}

    \begin{proof}[Proof of Claim \ref{clm:connect}] 


        Suppose, in order to get a contradiction, that $ K_{\u} $ is not connected. \textbf{(H)}
        
        Since $ K_{\u} $ is closed and bounded, we know that, if $ K_{\u} $ is disconnected, then there exists $ a_1, b_1, a_2, b_2 \in (-\infty, \infty) $ such that
        \begin{itemize}
            \item $ [a_1,b_1] \cup [a_2, b_2] \subseteq K_{\u} $ with $ b_1 < a_2 $; 
            \item $ (b_1, a_2) \cap K_{\u} = \emptyset $. 
        \end{itemize}
        
        For simplicity, let $ m := \frac{b_1 + a_2}{2} $ and $ \Delta := a_2 - b_1 $. Since $ b_1 $ and $ a_2 $ are limit points, we can find subsequences $ \{ z_{i_j^1} \}_j $ and $ \{ z_{i_j^2} \}_j $ such that 
        \begin{enumerate} 
            \item $ z_{i_j^1} \le m - \frac{\Delta}{4} $; 
            \item $ z_{i_j^2} \ge m + \frac{\Delta}{4} $; 
            \item $ i_j^1 < i_j^2 $ for all $j$. 
        \end{enumerate} 

        Since $ \{ \x_{t+1} - \x_t \}_t $ converges to zero, we know that $ \{z_{t+1} - z_t\}_t$ converges to zero. By items 1, 2 and 3 above, we know that there exists $ \{ i_j^3 \}_j $ such that $ i_j^1 < i_{j}^3 \le i_j^2 $ and $z_{i_j^3} \in ( m - \frac{\Delta}{4}, m + \frac{\Delta}{4} ] $ for infinitely many $j$. Otherwise, there will be a contradiction to the fact that $ \{z_{t+1} - z_t\}_t$ converges to zero. Then we know that $ \{ z_t \}_t $ has a limit point in $ [ m - \frac{\Delta}{4}, m + \frac{\Delta}{4} ] $, since $ \{ z_{i_j^3} \}_j $ has a limit point in $ [ m - \frac{\Delta}{4}, m + \frac{\Delta}{4} ] $. This is a contradiction to \textbf{(H)}. 
        
    \end{proof} 

    By Claim \ref{clm:connect}, we know that the limit points of $\{ \x_t \}_{t \in \mathbb{N}}$ is connected. In this realization $R$, it holds that that any limit point of $\{ \x_t \}_{t \in \mathbb{N}}$ is a critical point of $f$. By Assumption \ref{assumption:nonsmooth}, critical points of $f$ are isolated. Thus we know that the limit points of $\{ \x_t \}_{t \in \mathbb{N}}$ must be a singleton. In other words, the sequence $\{ \x_t \}_{t \in \mathbb{N}}$ converges to a critical point of $f$. 

    The above argument holds for an arbitrary realization $R$ that satisfies \textbf{(H0)}. By Lemma \ref{lem:convergence-grad}, almost all realizations satisfy \textbf{(H0)}. This finishes the proof. 
    
    \if\submission1
    \hfill $\blacksquare$ 
    \fi
    
\end{proof}  


Now that we have shown that $\{ \x_t \}_t$ converges to a critical point almost surely. We state a more convenient form of the \Loj inequality in the following proposition. 

\begin{proposition} 
    \label{prop:conv-loj} 
    Instate Assumption \ref{assumption:smooth}. Pick step sizes $\eta_t$ so that there exist $ \eta_-, \eta_+ \in (0, \infty) $ such that $ \eta_- \le \eta_t \le \eta_+ $ for all $t$. Let $ \x_\infty $ be the almost sure limit of $ \{ \x_t \}_{t \in \mathbb{N}} $. Then it holds that, for all $ t \ge T_0 $,  
    \begin{align*} 
        ( f (\x_t ) - f (\x_\infty) )^{2\theta} \le \| \nabla f (\x_t) \|^{\if\submission1{\textcolor{red}{2}}\else{2}\fi} , \quad \text{almost surely.} 
    \end{align*} 
\end{proposition} 

\begin{proof} 
    Proposition \ref{prop:conv-loj} is a direct consequence of $a.s.$ convergence of $\{ \x_t \}_t$ (Lemma \ref{lem:convergence-pt}) and the \Loj inequality. 
\end{proof}


\subsection{Convergence Rate of $ \{ f (\x_t) \}_{t \in \mathbb{N}}$} 
\label{sec:conv-rate} 

In the previous subsection, we have proved asymptotic convergence results for the SZGD algorithm. This section is devoted to convergence rate analysis of $ \{ f (\x_t) \}_{t \in \mathbb{N}}$. We first state Proposition \ref{prop:large-t} that holds true for large $t$.

\begin{proposition} 
    \label{prop:large-t} 
    
    Instate Assumption \ref{assumption:smooth}. 
    Let $\{ \x_t \}_{t\in\mathbb{N}}$ be the sequence generated by Algorithm \ref{alg}. Pick step sizes $\eta_t$ so that there exist $ \eta_-, \eta_+ \in (0, \infty) $ such that $ \eta_- \le \eta_t \le \eta_+ $ for all $t$.Let $ \x_\infty$ be the almost sure limit of $\{ \x_t \}_{t \in \mathbb{N}}$ (Lemma \ref{lem:convergence-pt}). 
    \begin{enumerate}[label=(\textit{\roman*})] 
        \item[] If $\theta \in (0,\frac{1}{2}]$, there exists $T_0 < \infty$ such that 
        \item $ f (\x_t) - f (\x_\infty) \le 1 $ for all $ t \ge T_0 $. 
        \item[] If $\theta \in (\frac{1}{2},1)$, there exist constants $C_0, T_0 < \infty$ such that 
        \item 
        $\E \[ f (\x_t) - f (\x_\infty) \] \in \( 0, \( \frac{1}{ 2\theta B } \)^{\frac{1}{2\theta - 1}} \) $ for all $t \ge T_0$
        \item $C_0 > 1$ and $ \frac{ C_0 }{ 2\theta - 1 } t^{ \frac{2 \theta }{1 - 2\theta} } - B C_0^{2\theta} t^{\frac{2 \theta  }{1 - 2 \theta}} + C \delta_t \le 0$ for all $t \ge T_0$; 
        \item $ \E \[ f (\x_{T_0}) - f (\x_\infty)\] \le C_0 T_0^{ \frac{1 }{1 - 2\theta} } $.  
    \end{enumerate} 
\end{proposition} 


\begin{proof}

    We first prove item $(i)$. 
    When $ \theta \in (0,\frac{1}{2} ] $, $ - z^{2 \theta } \overset{\textcircled{1}}{\le} - \min \{ 1 , z \} $ for all $z \in [0, \infty)$. 

    Let $ \mathcal{X} \subseteq \mathbb{N} $ be a set of times where $ f (\x_s) - f (\x_\infty) > 1$ for $s \in \mathcal{X}$. Suppose, in order to get a contradiction, that $ | \mathcal{X} | = \infty $. 

    \if\submission1{\textcolor{red}{By Proposition \ref{prop:rv-smooth}}}\else{By Proposition \ref{prop:rv-smooth}}\fi, we have 
    \begin{align*} 
        \lim_{t\to \infty } f (\x_{t+1}) 
        \le 
        f (\x_0) - B \if\submission1{\textcolor{red}{\frac{n}{k}}}\else{\frac{n}{k}}\fi \sum_{s \in \mathcal{X} } \nabla f (\x_s)^\top \P_s \nabla f (\x_s) + C \sum_{s=0}^\infty \delta_s . 
    \end{align*} 

    Note that $ \mathcal{X} $ and $ \E_{ \V_{s: s \in \mathcal{X}}} \[ \lim_{t\to \infty } f (\x_{t+1})  \] $ are random variables contained in $ \cup_{s=0}^\infty \mathcal{F}_s $. 
    Thus taking expectation with respect to $ \{ \V_{s}: s \in \mathcal{X} \} $ on both sides of the above inequality gives
    \begin{align*}
        \E_{ \V_{s: s \in \mathcal{X}}} \[ \lim_{t\to \infty } f (\x_{t+1})  \] 
        \le& \;  
        f (\x_0)  + C \sum_{s=0}^\infty \delta_s 
        - B \if\submission1{\textcolor{red}{\frac{n}{k}}}\else{\frac{n}{k}}\fi \sum_{s \in \mathcal{X} } \E_{\V_{s: s \in \mathcal{X}}} \[ \nabla f (\x_s)^\top \P_s \nabla f (\x_s) \] \\ 
        =& \; 
        f (\x_0)  + C \sum_{s=0}^\infty \delta_s 
        -  B \sum_{s \in \mathcal{X} } \| \nabla f (\x_s) \|^2 \\ 
        \le& \; 
        f (\x_0)  + C \sum_{s=0}^\infty \delta_s 
        -  B \sum_{s \in \mathcal{X} } \( f (\x_t ) - f (\x_\infty) \)^{2\theta} \tag{by Proposition \ref{prop:conv-loj}} \\ 
        \le& \; 
        f (\x_0)  + C \sum_{s=0}^\infty \delta_s 
        -  B \sum_{s \in \mathcal{X} } \min \{ 1 , f (\x_t) - f (\x_{\infty}) \} . \tag{by \textcircled{1}} 
    \end{align*}

    \if\submission1{
    \emph{\textcolor{red}{(Compared to the previous version, the last line in the above derivation is removed)}} 
    }\fi 
    
    If there does not exist a $T_0$ such that $ f (\x_t) -  f (\x_{\infty}) \le 1 $ for all $ t \ge T_0 $, then the above implies that $ f (\x_t) $ goes to negative infinity. This is a contradiction to that $ f (\x_t) $ is bounded from below (Assumption \ref{assumption:nonsmooth}). This finishes the proof of item $(i)$.

    Item $(ii)$ follows from Lemma \ref{lem:convergence-pt}. Next we prove items $(iii)$ and $(iv)$.

    Recall $B = -\min \left\{ \( \frac{ L \eta_-^2 n }{ 2k } - \eta_- \), \( \frac{ L \eta_+^2 n }{ 2k } - \eta_+ \) \right\}. $ 
    Consider a $T_0$ so that the following is satisfied for all $t \ge T_0$
    \begin{align} 
        & C \delta_t t^{\frac{2\theta}{2\theta - 1}} \le \frac{1}{2\theta - 1} .  \label{eq:dummy2} 
    \end{align} 
    Next, since $2 \theta > 1$, we can pick $C_0' > 1$ so that 
    \begin{align}
        \frac{ C_0' + 1 }{ 2\theta - 1 } - B (C_0')^{2\theta} \le 0 . \label{eq:dummy3}
    \end{align}
    
    Thus, for $t \ge T_0$, it holds that 
    \begin{align*}  
        \frac{ C_0' }{ 2\theta - 1 } - B (C_0')^{2\theta} + C \delta_t  t^{ \frac{2 \theta }{2\theta - 1} }  
        \le& \; 
        \frac{ C_0' + 1 }{ 2\theta - 1 } - B (C_0')^{2\theta} \le   0 . \tag{by Eqs. \ref{eq:dummy2} and \ref{eq:dummy3}} 
    \end{align*}
   By multiplying both sides of the above inequality by $t^{\frac{2\theta}{1-2\theta}}$, we find a $C_0'$ satisfying $(iii)$. In fact, any constant larger than this $ C_0' $ satisfies this item.

    Finally, we can find $C_0''$ satisfying item $(iv)$, since $ \E \[ f (\x_{T_0}) - f (\x_\infty)\] $ is absolutely bounded for any given $T_0$. Indeed, any constant larger than this $ C_0'' $ satisfies this item. We finish the proof by taking $C_0 = \max \{ C_0', C_0'' \}$. 
\end{proof} 


By Propositions \ref{prop:large-t} and \ref{prop:conv-loj}, (\ref{eq:for-convergence}) implies that, for $t \ge T_0$, 
\begin{align}
    \E_t \[ f (\x_{t+1})  \] - f (\x_\infty) 
    \le& \; 
    f (\x_t) - f (\x_\infty) - B \| \nabla f (\x_t) \|^2 + C \delta_t \nonumber \\
    \le& \; 
    f (\x_t) - f (\x_\infty) - B \( f (\x_t) - f (\x_\infty) \)^{2\theta} + C \delta_t. \label{eq:for-convergence-zero}
\end{align} 

The above inequality (\ref{eq:for-convergence-zero}) is a stochastic relation for the stochastic sequence $ \{ f (\x_t) - f (\x_\infty) \}_{t \in \mathbb{N}} $. In what follows, we will study the convergence behavior of this sequence.



Now we are ready to prove Theorem \ref{thm:rate}.

\begin{proof}[Proof of Theorem \ref{thm:rate}(a)] 
    Let $T_0$ be the constant so that item $(i)$ in Proposition \ref{prop:large-t} is true. 
    By Eq. (\ref{eq:for-convergence}), we have 
    \begin{align}
        \E_{\V_t} \[ f (\x_{t+1}) \] - f (\x_{\infty}) 
        \le& \; 
        f (\x_t) - f (\x_{\infty}) - B \| \nabla f (\x_t) \|^2 + C \delta_t \nonumber \\
        \le& \; 
        f (\x_t) - f (\x_\infty) - B \( f (\x_t) - f (\x_\infty) \)^{ 2\theta } + C \delta_t \label{eq:f-seq} \\ 
        \le& \; 
        \( 1 - B \) \( f (\x_t) - f (\x_\infty) \) + C \delta_t,  \label{eq:cond-exp}
    \end{align} 
    where (\ref{eq:f-seq}) uses Proposition \ref{prop:conv-loj} and the last inequality uses item $(i)$ in Proposition \ref{prop:large-t}. 
    Taking total expectation on both sides of (\ref{eq:cond-exp}) gives 
    \begin{align} 
        & \; \E \[ f (\x_{t+1}) - f (\x_\infty) \] \nonumber \\ 
        \le& \;  
        (1 - B) \E \[ f (\x_t) - f (\x_\infty) \] + C \delta_{t} \nonumber \\
        \vdots&  \nonumber \\
        \le& \;  
        (1-B)^{t- T_0\if\submission1{\textcolor{red}{+1}}\else{+1}\fi} \E \[  \( f (\x_{T_0}) - f (\x_\infty) \) \] + C \if\submission1{\textcolor{red}{\max\{ (1-B)^{t-T_0}, 2^{-(t-T_0)} \} }}\else{ \max\{ (1-B)^{t-T_0}, 2^{-(t-T_0)} \} }\fi  
        \label{eq:linear-rate-exp} . 
    \end{align} 


    Let $Q \in (1, \if\submission1{\textcolor{red}{\min\{ (1-B)^{-1}, 2 \} }}\else{\min\{ (1-B)^{-1}, 2 \}}\fi )$ and let $ Z_t := Q^{t}  \( f (\x_{t} ) - f (\x_\infty) \) $. 
    By (\ref{eq:linear-rate-exp}), we have 
    \begin{align*}
        \E \[ Z_{t+1} \] 
        \le 
        Q^t (1-B)^{t- T_0} Z_{T_0} + C Q^t \if\submission1{\textcolor{red}{ \max \{ (1-B)^{t-T_0} , 2^{-(t-T_0)} \} } }\else{\max \{ (1-B)^{t-T_0} , 2^{-(t-T_0)} \}}\fi . 
    \end{align*} 

    We conclude the proof by noticing 
        $\lim_{t \to \infty} \E \[ Z_{t+1} \] = 0. $
    

\end{proof}

\begin{proof}[Proof of Theorem \ref{thm:rate}(b)] 
    Let $C_0$ and $T_0$ be two constants so that items $(ii)$, $(iii)$ and $(iv)$ in Proposition \ref{prop:large-t} hold true. 

    Taking expectation on both sides of (\ref{eq:for-convergence-zero}) gives, for all $t \ge T_0$,  
    \begin{align*}
        \E \[ f (\x_{t+1}) - f (\x_\infty) \] 
        \le& \;  
        \E \[ f (\x_t) - f (\x_\infty) \] - B \E \[ \( f (\x_t) - f (\x_\infty) \)^{2\theta} \] + C \delta_t \\
        \le& \; 
        \E \[ f (\x_t) - f (\x_\infty) \] - B \E \[  f (\x_t) - f (\x_\infty) \]^{2\theta} + C \delta_t , 
    \end{align*}
     where the last inequality uses Jensen's inequality.

    For simplicity, write $ y_t := \E \[ f (\x_t) - f (\x_\infty) \] $ for all $t$. 
    Next, we use induction to show that 
    \begin{align}
        y_t \le C_0 t^{ \frac{1}{1 - 2 \theta} } , \quad \forall t \ge T_0 . 
        \label{eq:conv-sublin-exp}
    \end{align}

    Suppose that $y_t \le C_0 t^{\frac{1 }{1 - 2 \theta}}$, which is true when $t = T_0$ (item $(iv)$ in Proposition \ref{prop:large-t}). Then for $y_{t+1}$ we have 
    \begin{align} 
        y_{t+1} 
        \le  
        y_t -B y_t^{2\theta} + C \delta_t 
        \le& \; 
        C_0 t^{\frac{1 }{1 - 2 \theta}} -B C_0^{2\theta} t^{\frac{2 \theta }{1 - 2 \theta}} + C \delta_t , \label{eq:critical} 
    \end{align} 
    where second inequality uses item $ (ii) $ in Proposition \ref{prop:large-t} and that the function 
    $z \mapsto z -B z^{2\theta}$
    is strictly increasing when $ z \in \( 0, \( \frac{1}{ 2\theta B } \)^{\frac{1}{2\theta - 1}} \) $.

    
    
    
    
    By applying Taylor's theorem (mean value theorem) to the function $ h (w) = (t + w)^{ \frac{1 }{1 - 2 \theta} } $, we have $ h (1) = h (0) + h' (z) $ for some $z \in [0,1]$. This gives 
    \begin{align} 
        (t+1)^{ \frac{1 }{ 1 - 2 \theta } } 
        = 
        t^{ \frac{1 }{ 1 - 2 \theta } } + \frac{1}{1 - 2\theta} (t + z)^{ \frac{2 \theta }{1 - 2\theta} }
        \ge 
        t^{ \frac{1 }{ 1 - 2 \theta } } + \frac{1}{1 - 2\theta} t^{ \frac{2 \theta }{1 - 2\theta} }, 
        \label{eq:mean-value} 
    \end{align} 
    since $z \in [0,1]$. Thus by (\ref{eq:critical}), we have 
    \begin{align*} 
        y_{t+1} 
        \le& \; 
        C_0 t^{\frac{1  }{1 - 2 \theta}} - B C_0^{2\theta} t^{\frac{2 \theta  }{1 - 2 \theta}} + C \delta_t \\
        \le& \; 
        C_0 (t+1)^{ \frac{1 }{ 1 - 2 \theta } } + C_0 \frac{1}{ 2\theta - 1 } t^{ \frac{2 \theta }{1 - 2\theta} } - B C_0^{2\theta } t^{\frac{2 \theta  }{1 - 2 \theta}} + {C} \delta_t 
        \le 
        C_0 (t+1)^{ \frac{1 }{ 1 - 2 \theta } } , 
    \end{align*} 
    where the second inequality uses (\ref{eq:mean-value}), and the last inequality uses $(iii)$ in Proposition \ref{prop:large-t}. 




\end{proof}

\subsection{Convergence Rate of $ \{ \x_t \}_{t \in \mathbb{N} }$}
\label{sec:conv-rate-x}


In the previous subsection, we have proved convergence rate of $ \{ f (\x_t) \}_{t \in \mathbb{N}} $. 
In this section, we prove convergence rates for $ \{ \| \x_t - \x_\infty \| \}_{t \in \mathbb{N} } $ and $ \{ \sum_{s=t}^\infty \| \x_s - \x_{s+1} \|^2 \}_{t \in \mathbb{N} } $.


\begin{theorem} 
    \label{thm:rate-norm-sq} 
    Instate Assumption \ref{assumption:smooth}. Let $\x_t$ be a sequence generated by the SZGD algorithm, and let $ \x_\infty$ be the almost sure limit of $ \{ \x_t \}_t $. Pick step sizes $\eta_t$ so that there exist $ \eta_-, \eta_+ \in (0, \infty) $ such that $ \eta_- \le \eta_t \le \eta_+ $ for all $t$. Then it holds that 
    \begin{enumerate}[label=(\alph*)] 
        \item if $ \theta \in (0, \frac{1}{2} ] $, $ - 1 \le \( \frac{L n \eta_-^2 }{2k} - \eta_- \) < 0 $ and $ - 1 \le \( \frac{L n \eta_+^2 }{2k} - \eta_+ \) < 0 $, there exists a constant $Q > 1$ such that $ \{ Q^t \sum_{s=t}^\infty \| \x_{s+1} - \x_{s} \|^2 \}_t $ converges to $0$ in expectation. 
        \item if $ \theta \in ( \frac{1}{2}, 1 ) $ and $ \eta_-, \eta_+ \in \( 0, \frac{2k}{Ln} \) $, then it holds that $ \left\{  \sum_{s=t}^\infty \| \x_{s+1} - \x_{s} \|^2 \right\}_t $ converges to $0$ at rate $ O \( t^{ \frac{ 1 }{ 1 - 2 \theta  } } \) $ in expectation. 
    \end{enumerate} 
\end{theorem} 

\begin{proof}
    Note that $ \{ \| \nabla f (\x_t) \| \}_t $ is absolutely bounded due to boundedness of $\{ \x_t \}_t$ and continuity of $ \nabla f (\x) $. 
    By Theorems \ref{thm:variance} and \ref{thm:bias} it holds that 
    \begin{align*} 
        \E_{t} \[ \| \wh{\nabla} f_{\V_t}^{\delta_t} ( \x_t ) \|^2 \] 
        \le& \;  
        \| \E_{t} \[ \wh{\nabla} f_{\V_t}^{\delta_t} ( \x_t ) \] \|^2 + \( \frac{n}{k} - 1 \) \| \nabla f (\x_t) \|^2 \\ 
        &+ \frac{ 4 L \if\submission1{\textcolor{red}{\delta_t}}\else{\delta_t}\fi }{ \sqrt{3} } \( \frac{n^2}{k}  - n \) \| \nabla f (\x_t) \| + \frac{ 4 L^2 n^2 \delta_t^2 }{ 3 k } 
        \le 
        \frac{n}{k} \| \nabla f (\x_t) \|^2 + C \delta_t . 
    \end{align*} 
    
    Thus it holds that 
    \begin{align*} 
        \frac{k}{n} \( \eta_t - \frac{L\eta_t^2 n }{2 k } \) \E_t \[ \| \wh{\nabla} f_{\V_t}^{\delta_t} ( \x_t ) \|^2 \] 
        \le& \; 
         \( \eta_t - \frac{L\eta_t^2 n }{2 k } \) \| \nabla f (\x_t) \|^2 + C \delta_t \\ 
        \le& \; 
        f (\x_t) - \E_t \[ f (\x_{t+1}) \] + C \delta_t , 
    \end{align*} 
    where the second inequality uses (\ref{eq:for-convergence}). 
    
    Since $ \x_{t+1} = \x_t - \eta_t \wh{\nabla} f_{\V_t}^{\delta_t} (\x_t) $, the above inequality implies that, 
    \begin{align} 
        \frac{k}{n} \(  \frac{1}{\eta_t} - \frac{L n }{2 k } \) \E_t \[ \| \x_{t+1} - \x_t \|^2 \] 
        \le
        f (\x_t) - \E_t \[ f (\x_{t+1}) \] + C \delta_t. \label{eq:l-smooth-dis} 
    \end{align}

    Taking total expectation on both sides of the above inequality gives
    \begin{align*} 
        & \; \frac{k}{n} \( \frac{1}{\eta_t} - \frac{L n }{2 k } \) \E \[ \| \x_{s+1} - \x_s \|^2 \] 
        \le 
        \E \[ f (\x_t) - f (\x_\infty) \] - \E \[ f (\x_{t+1}) - f (\x_\infty) \] + C \delta_t . 
    \end{align*} 
    
    Since $ \{ \x_t \}_t $ converges almost surely and $\eta_t \in [\eta_-, \eta_+]$, summing up the above inequality gives 
    \begin{align*} 
        \frac{k}{n} \( \frac{1}{\eta_+} - \frac{L n }{2 k } \) \E \[ \sum_{s=t}^\infty \| \x_{s+1} - \x_s \|^2 \] 
        \le 
        \E \[ f (\x_t) - f (\x_\infty) \] + C \delta_t, \quad \forall t \ge T_0. 
    \end{align*} 
    
    Since we have proved the convergence rate for $\{ \E \[ f (\x_t) - f (\x_\infty) \] \}_t$, we can conclude the proof by the result of Theorem \ref{thm:rate}. 
    

\end{proof} 


Next we provide convergence rate guarantee for $ \left\{ \| \x_t - \x_\infty \| \right\}_t$ in Theorem \ref{thm:rate-norm}. 

\begin{theorem} 
    \label{thm:rate-norm} 
    Instate Assumption \ref{assumption:smooth}. Let $ \x_t$ be a sequence generated by the SZGD algorithm, and let $ \x_\infty$ be the almost sure limit of $ \{ \x_t \}_t $. Pick step sizes $\eta_t$ so that there exist $ \eta_-, \eta_+ \in (0, \infty) $ such that $ \eta_- \le \eta_t \le \eta_+ $ for all $t$. Then it holds that 
    \begin{enumerate}[label=(\alph*)] 
        \item if $ \theta \in (0, \frac{1}{2} ] $, $ - 1 \le \( \frac{L n \eta_-^2 }{2k} - \eta_- \) < 0 $ and $ - 1 \le \( \frac{L n \eta_+^2 }{2k} - \eta_+ \) < 0 $, there exists a constant $Q > 1$ such that $ \{ Q^t \| \x_{t} - \x_{\infty} \| \}_t $ converges to $0$ in expectation. 
        \item if $ \theta \in ( \frac{1}{2}, 1 ) $ and $ \eta_-, \eta_+ \in \( 0, \frac{2k}{Ln} \) $, then it holds that $ \left\{ \| \x_{t} - \x_{\infty} \| \right\}_t $ converges to $0$ at rate $ O \(  t^{ \frac{ 1-\theta }{ 1 - 2 \theta } } \) $ in expectation.  
    \end{enumerate} 
\end{theorem} 


\begin{proof} 
    In \cite{attouch2009convergence}, Attouch and Bolte uses properties of the function $x \mapsto -x^{1-\theta}$ ($x > 0$, $\theta \in (0,1)$) to study the convergence of $ \{ \| \x_t - \x_\infty \| \}_t $. Here we follow a similar path, but in a probabilistic manner.  
    By convexity of the function $x \mapsto -x^{1-\theta}$ ($x > 0$, $\theta \in (0,1)$), it holds that 
    \begin{align} 
        z_2^{1 - \theta} - z_1^{1 - \theta} \ge (1 - \theta ) z_2^{-\theta} ( z_2 - z_1 ) , \qquad \forall z_1, z_2 > 0, \; \; \theta \in (0,1) . \label{eq:from-conv}
    \end{align} 
    By letting $z_1 = f (\x_{t+1} ) - f (\x_\infty)$ and $ z_2 = f (\x_{t}) - f (\x_\infty) $, we have 
    \begin{align*}
        ( f (\x_t) - f (\x_{\infty}) ) ^{1-\theta} - ( f (\x_{t+1}) - f (\x_\infty) ) ^ {1-\theta} 
        \ge& \; 
        (1-\theta) ( f (\x_t) - f (\x_\infty) ) ^{-\theta} \( f (\x_t) - f (\x_{t+1}) \) . 
    \end{align*}
    Taking conditional expectation (conditioning on $\mathcal{F}_t$) on both sides of the above equation gives
    \begin{align}
        & \; ( f (\x_t) - f (\x_\infty) )^{1-\theta} - \E_t \[ ( f (\x_{t+1}) - f (\x_\infty ))^{1-\theta} \] \nonumber \\
        \ge& \; 
        (1-\theta) (f (\x_t) - f (\x_\infty))^{-\theta} \( f (\x_t) - \E_t \[ f (\x_{t+1}) \] \) \nonumber \\
        \ge& \; 
        (1-\theta) ( f (\x_t) - f (\x_\infty) )^{-\theta} 
        \( \frac{k}{n} \(  \frac{1}{\eta_t} - \frac{L n }{2 k } \) \E_t \[ \| \x_{t+1} - \x_t \|^2 \] - C \delta_t \) , \label{eq:for-x-norm-1} 
    \end{align} 
    where the last line uses (\ref{eq:l-smooth-dis}). 

    Also, it holds that, for sufficiently large $t$, 
    \begin{align} 
        0 <&\; 
        ( f (\x_t) - f (\x_\infty) ) ^{\theta} \overset{\1}{\le} \| \nabla f (\x_t) \|  
        \overset{\2}{\le}  
        \left\| \E_t \[ \wh{\nabla} f_{\V_t}^{\delta_t} (\x_t) \] + O \( \frac{ n  \delta_t }{n+1} \bm{1} \) \right\| \nonumber \\
        {\le}& \;  
        \left\| \E_t \[ \wh{\nabla} f_{\V_t}^{\delta_t} (\x_t) \] \right\| + C \delta_t 
        = 
        \frac{1}{\eta_t} \| \E_t \[ \x_{t+1} - \x_t \] \| + C \delta_t  \nonumber \\
        {\le}& \;  
        \frac{1}{\eta_t} \sqrt{ \E_t \[ \| \x_{t+1} - \x_t \|^2 \] } + \sqrt{ C \delta_t } ,  \label{eq:for-x-norm-2} 
    \end{align} 
    where \1 uses Proposition \ref{prop:conv-loj}, and \2 uses Theorem \ref{thm:bias}. \if\submission1\emph{\textcolor{red}{(Compared to the previous version, some typographical square roots before in the above equations are removed. )}}\fi 
    
    Combining the above result with (\ref{eq:for-x-norm-1}) gives
    \begin{align} 
        & \; ( f (\x_t) - f (\x_\infty) )^{1-\theta} - \E_t \[ ( f (\x_{t+1}) - f (\x_\infty) )^{1-\theta} \] \nonumber \\
        \ge& \;  
        \frac{ (1-\theta) 
        \( \frac{k}{n} \( \if\submission1{\textcolor{red}{ \frac{1}{\eta_t} - \frac{L n }{2 k } }}\else{ \frac{1}{\eta_t} - \frac{L n }{2 k } }\fi \) {\eta_t} \E_t \[ \| \x_{t+1} - \x_t \|^2 \] - C \delta_t \) }{ \sqrt{ \E_t \[ \| \x_{t+1} - \x_t \|^2 \] } + \sqrt{ C \delta_t }  } \nonumber \\ 
        =& \; 
        (1-\theta) \frac{k}{n} \( \if\submission1{\textcolor{red}{ 1 - \frac{L n \eta_t }{2 k } }}\else{ 1 - \frac{L n \eta_t }{2 k } }\fi \) \frac{ \E_t \[ \| \x_{t+1} - \x_t \|^2 \] - C \delta_t }{ \sqrt{ \E_t \[ \| \x_{t+1} - \x_t \|^2 \] } + \sqrt{ C \delta_t } } . \label{eq:for-x-norm-3} 
    \end{align} 
    
    
    
    For sufficiently large $t$ such that $C\delta_t < 1$, we have 
    \begin{align*}
        &\; \sqrt{ \E_t \[ \| \x_{t+1} - \x_t \|^2 \] } - \( C \delta_t \)^{1/2}  
        \le 
        \frac{ \E_t \[ \| \x_{t+1} - \x_t \|^2 \] - C \delta_t }{ \sqrt{ \E_t \[ \| \x_{t+1} - \x_t \|^2 \] } + \sqrt{ C \delta_t } } 
        \\
        \le& \; 
        \frac{ n }{ (1-\theta) k \( 1 - \frac{L n \eta_t }{ 2 k } \) } \( ( f (\x_t) - f (\x_\infty) )^{1-\theta} - \E_t \[ ( f (\x_{t+1}) - f (\x_\infty) )^{1-\theta} \] \) 
    \end{align*} 
    where the last inequality uses (\ref{eq:for-x-norm-3}). 
    
    Combining Jensen's inequality and the above inequality gives
    \begin{align*} 
        &\; \E_t \[ \| \x_{t+1} - \x_t \| \] - \( C \delta_t \)^{1/2}  
        \le 
        \sqrt{ \E_t \[ \| \x_{t+1} - \x_t \|^2 \] } - \( C \delta_t \)^{1/2}  \\ 
        \le& \; 
        \frac{ n }{ (1-\theta) k \( 1 - \frac{L n \eta_t }{ 2 k } \) } \( ( f (\x_t) - f (\x_\infty) )^{1-\theta} - \E_t \[ ( f (\x_{t+1}) - f (\x_\infty) )^{1-\theta} \] \) . 
    \end{align*} 
    
    Taking total expectation on both sides of the above inequality and summing over times gives
    \begin{align*}
        \sum_{s = t}^\infty \E \[ \| \x_{s+1} - \x_s \| \] 
        \le& \;  
        \frac{ n }{ (1-\theta) k \( 1 - \frac{L n \eta_t }{ 2 k } \) } \E \[ ( f (\x_t) - f (\x_\infty) )^{1-\theta} \] + C \sum_{s=t}^\infty \delta_s^{1/2} \\
        =& \; 
        \frac{ n }{ (1-\theta) k \( 1 - \frac{L n \eta_t }{ 2 k } \) } \E \[ ( f (\x_t) - f (\x_\infty) )^{1-\theta} \] + C \delta_t^{1/2} 
    \end{align*} 
    Since $ \| \x_t - \x_\infty \| \le  \sum_{s = t}^\infty \| \x_{s+1} - \x_s \| $, the above implies that 
    \begin{align*} 
        \E \[ \| \x_t - \x_\infty \| \] 
        \le& \;  
        \frac{ n}{ (1-\theta) k \( 1 - \frac{L n \eta_t }{ 2 k } \) } \E \[ ( f (\x_t) - f (\x_\infty) )^{1-\theta} \] + C \delta_t^{1/2} \\
        \le& \; 
        \frac{ n }{ (1-\theta) k \( 1 - \frac{L n \eta_t }{ 2 k } \) } \E \[ f (\x_t) - f (\x_\infty) \]^{1-\theta}  + C \delta_t^{1/2} ,
    \end{align*}
    where the last inequality uses Jensen’s inequality. We can conclude the proof by the result of Theorem \ref{thm:rate}. 

    \if\submission1
    \emph{\textcolor{red}{(in the above equations, some typographical appearance of ${\eta_t}$ is removed.)}} 
    \fi 

\end{proof}

\subsection{Implications on Gradient Descent} 


The convergence rate for SZGD implies sure convergence rate of the gradient descent algorithm. 
In this section, we will display convergence rate results for the gradient descent algorithm on \Loj functions. 
Note that the classic work \cite{attouch2009convergence} provides convergence rate for the proximal algorithm, not the gradient descent algorithm. Also \cite{karimi2016linear} provides analysis for the gradient descent on the Polyak--\Loj functions, not the \Loj functions. 
Thus this convergence rate of $\{ f (\x_t) \}_t$ governed by gradient descent for smooth \Loj functions is one of our contributions, although it may not be as important as the results in previous sections. 


Recall the gradient descent algorithm iterates as 
\begin{align} 
    \x_{t+1} = \x_t - \eta_t \nabla f(\x_t). \tag{Gradient Descent (GD)}
\end{align} 
Compare to GD, the SZGD algorithm does not require one to have access to first-order information of the objective. In this sense, SZGD algorithm makes weaker assumptions about the environment. 

\begin{corollary} 
    \label{cor:non-stoc-smooth-linear} 
    Instate Assumption \ref{assumption:smooth}. Let $\x_t$ be a sequence generated by the gradient descent algorithm, and let $\x_\infty$ be the limit of $ \{ \x_t \}_t $. Suppose there exists $\eta_-, \eta_+ \in (0,\infty)$ such that $ \eta_- \le \eta_t \le \eta_+ $ for all $t$. 
    Then if $ \theta \in (0, \frac{1}{2} ] $, $ - 1 \le \( \frac{L \eta_-^2 }{2} - \eta_- \) < 0 $ and $ - 1 \le \( \frac{L \eta_+^2 }{2} - \eta_+ \) < 0 $, there exists a constant $Q > 1$ such that $ \{ Q^t \| \x_{t} - \x_{\infty} \| \}_t $ converges to $0$ and $ \{ Q^t \( f (\x_{t}) - f ( \x_{\infty} ) \) \}_t $ converges to $0$. 
\end{corollary} 

Corollary \ref{cor:non-stoc-smooth-linear} follows immediately from Theorems \ref{thm:rate} and \ref{thm:rate-norm}. 
If $\theta \in (0,\frac{1}{2}]$, Corollary \ref{cor:non-stoc-smooth-linear} provides linear convergence rate of $\{ \| \x_t - \x_\infty \| \}_t$ and $\{ f ( \x_t ) - f ( \x_\infty ) \}_t$ with $x_t$ governed by the GD algorithm. When $\theta \in (0,\frac{1}{2}]$, linear convergence rate for $\{ \sum_{s=t}^\infty \| \x_{s+1} - \x_s \| \}_t$ can be similarly obtained. 

For $ \theta \in (\frac{1}{2} , 1) $, the convergence result is summarized in Theorem \ref{thm:conv-nonstoc}. 
To prove Theorem \ref{thm:conv-nonstoc}, we first need the following proposition. 


\begin{proposition}
    \label{prop:large-t-nonstoc} 
    Instate Assumption \ref{assumption:smooth}. Let $\{ \x_t \}_{t\in\mathbb{N}}$ be the sequence governed by the gradient descent algorithm. Pick step sizes $\eta_t$ so that there exist $ \eta_-, \eta_+ \in (0, \infty) $ such that $ \eta_- \le \eta_t \le \eta_+ $ for all $t$. Let $\{ \x_t \}_{t \in \mathbb{N}}$ be bounded and let $ \x_\infty$ be the limit of $\{ \x_t \}_{t \in \mathbb{N}}$. 
    Then there exist constants $C_0$ and $T_0$ such that the following holds: 
    \begin{enumerate}[label=(\textit{\roman*})] 
        \item $f (\x_t) - f (\x_\infty) \in \( 0, \( \frac{1}{ 2\theta B }  \)^{\frac{1}{2\theta - 1}} \)$ for all $t \ge T_0$.  
        \item $ - C_0^{2\theta} B + \frac{ C_0 }{2\theta - 1}  \le 0 $, where $B$ is defined in (\ref{eq:def-B}). 
        \item $ f (\x_{T_0}) - f (\x_\infty) \le C_0 T_0^{ \frac{1}{1 - 2\theta} } $. 
    \end{enumerate} 
\end{proposition} 

\begin{proof} 

    Since $f$ is $L$-smooth, it holds that 
    \begin{align*} 
        f (\x_{t+1}) 
        \le& \;  
        f (\x_t) + \nabla f (\x_t)^\top (\x_{t+1} - \x_t) + \frac{L}{2} \| \x_t - \x_{t+1} \|^2 \\ 
        \le& \; 
        f (\x_t) - B \| \nabla f (\x_t) \|^2 ,  
    \end{align*} 
    where $B$ is defined in (\ref{eq:def-B}). 

    Therefore, $\lim_{t \to \infty} \| \nabla f (\x_t) \| = 0$. Otherwise, the above inequality leads to a contradiction to that $ \inf_{\x} f (\x) > -\infty $. By Lemma \ref{lem:convergence-pt}, we know that $ \{ \x_t \}_t $ converges to a critical point of $f$. Thus by continuity of $f$, item $(i)$ can be satisfied since $ \{ f (\x_t) \}_t$ converges. 
    Item $(ii)$ can be satisfied by picking some $ C_0 > 1 $. 
    Since $ \{ f (\x_t) \}_t $ is bounded, we can pick $ C_0 $ large enough so that item $ (iii) $ is satisfied. 
    
\end{proof}


With the above proposition, we are ready to prove Theorem \ref{thm:conv-nonstoc}. 

\begin{theorem}
    \label{thm:conv-nonstoc} 
    Instate Assumption \ref{assumption:smooth}. Let $\x_t$ be a sequence generated by the gradient descent algorithm, and let $\x_\infty$ be the limit of $ \{ \x_t \}_t $. 
    Pick step sizes $\eta_t$ so that there exist $ \eta_-, \eta_+ \in (0, \infty) $ such that $ \eta_- \le \eta_t \le \eta_+ $ for all $t$.
    If $ \theta \in (\frac{1}{2} ,1) $, then $ f (\x_t) - f (\x_\infty) \le O \( t^{\frac{1}{1-2\theta}} \) $. 
\end{theorem} 

\begin{proof} 
    
    Let $T_0$ and $C_0$ be the constants so that Proposition \ref{prop:large-t-nonstoc} holds true. 
    
    By $L$-smoothness by $f$, it holds that 
    \begin{align*} 
        f (\x_{t+1}) 
        \le& \;  
        f (\x_t) + \nabla f (\x_t)^\top \( \x_{t+1} - \x_t \) + \frac{L}{2} \| \x_t - \x_{t+1} \|^2 \\
        =& \; 
        f (\x_t) + \( \frac{L\eta_t^2}{2} - \eta_t \) \| \nabla f (\x_t) \|^2. 
    \end{align*}
    
    Since $x_t$ is close to a critical point for all $t \ge T_0$ (item $ (i) $ in Proposition \ref{prop:large-t-nonstoc}), combining the above inequality and the \Loj inequality gives
    \begin{align} 
        f (\x_{t+1}) - f (\x_\infty)
        \le& \; 
        f (\x_t) - f (\x_\infty) + \( \frac{L\eta_t^2}{2} - \eta_t \) ( f (\x_t) - f (\x_\infty) )^{2\theta} 
        \label{eq:for-ind-nonstoc} \\ 
        \le& \; 
        f (\x_t) - f (\x_\infty) - B ( f (\x_t) - f (\x_\infty) )^{2\theta} \nonumber, 
    \end{align} 
    where $B$ is defined in (\ref{eq:def-B}). 

    
    For a given $t \ge T_0$, consider the function $ h (x) = (t+x)^{\frac{1}{1-2\theta}} $. Applying Taylor's theorem to $h$ gives
    $
        (t+1)^{\frac{1}{1-2\theta}} 
        = 
        t^{\frac{1}{1-2\theta}} + \frac{1}{1-2\theta} (t+z)^\frac{2\theta}{1-2\theta}  
    $ 
    for some $z \in [0,1]$, which implies
    \begin{align}
        (t+1)^{\frac{1}{1-2\theta}} 
        \ge
        t^{\frac{1}{1-2\theta}} + \frac{1}{1-2\theta} t^\frac{2\theta}{1-2\theta} .  \label{eq:taylor-nonstoc}
    \end{align}

    We will use induction to finish the proof. Item $(iii)$ in Proposition \ref{prop:large-t-nonstoc} states $ f (\x_{T_0}) - f (\x_\infty) \le C_0 {T_0}^{\frac{1}{1 - 2\theta}} $. Inductively, if $f (\x_{t}) - f (\x_\infty) \le C_0 {t}^{\frac{1}{1 - 2\theta}} $ ($t \ge T_0$), then (\ref{eq:for-ind-nonstoc}) implies that 
    \begin{align*} 
        &\; f (\x_{t+1}) - f (\x_\infty) 
        \le
        C_0 t^{\frac{1}{1-2\theta}} - B C_0^{2\theta} t^{ \frac{2\theta}{1-2\theta} } \\
        \le& \; 
        C_0 (t+1)^{\frac{1}{1-2\theta}} - B C_0^{2\theta} t^{ \frac{2\theta}{1-2\theta} } + \frac{ C_0 }{2\theta - 1} t^{\frac{2\theta}{1-2\theta}} 
        \le 
        C_0 (t+1)^{\frac{1}{1-2\theta}} , 
    \end{align*} 
    where the first line uses (\ref{eq:for-ind-nonstoc}), the induction hypothesis, item $(i)$ in Proposition \ref{prop:large-t-nonstoc}, and that the function $x \mapsto x - B x^{2\theta} $ is strictly increasing on $ \( 0, \( \frac{1}{ 2\theta B }  \)^{\frac{1}{2\theta - 1}} \) $, the second line uses (\ref{eq:taylor-nonstoc}), and the last line uses item $(ii)$ in Proposition \ref{prop:large-t-nonstoc}. 
    
\end{proof}

\section{The Proximal Algorithm for Nonsmooth \Loj Functions} 

\label{sec:nonsmooth} 


Let $f$ be a function that is continuous but possibly nonsmooth. In such cases, we use the (subgradient) proximal algorithm to solve this optimization problem. This section serves to provide a convergence rate analysis for the proximal algorithm for nonsmooth \Loj functions with exponent $\theta \in (0,\frac{1}{2})$. Previously, \cite{attouch2009convergence} showed that, for convex nonsmooth \Loj functions, when the \Loj exponent $\theta \in (\frac{1}{2},1)$, the sequence $\{ \x_t \}$ governed by the proximal algorithm (\ref{eq:proximal}) converges at rate $ \| \x_t - \x_\infty \| \le O \( t^{ \frac{1 - \theta}{1- 2 \theta}} \) $. In this section, we show that the proximal algorithm satisfies $ f (\x_t) - f (\x_\infty) \le O \( t^{ \frac{1 }{1- 2 \theta}} \) $. When $\theta \in (\frac{1}{2}, 1)$ the function value $ \{ f (\x_t) \}_t$ tends to converge at a faster rate than the point sequence $\{ \| \x_t - \x_\infty \| \}_t$. This phenomenon for the proximal algorithm again suggests that the convergence rate of $ \{ f (\x_t) \}_t $ may be more important and informative than the convergence rate of $ \{ \| \x_t - \x_\infty \| \}_t $, since the trajectory of $ \{ \x_t \}_t $ may be inevitably spiral \cite{https://doi.org/10.1112/blms.12586}. 

\subsection{Preliminaries for Nonsmooth Analysis and Proximal Algorithm} 

Before proceeding, we first review some preliminaries for nonsmooth analysis and the proximal algorithm. We begin by the concept of subdifferential and subgradient in nonsmooth analysis.

\begin{definition}[\cite{rockafellar2009variational}]
    \label{def:subdifferential}
    Consider a proper lower semicontinuous function $f : \R^n \to \R \cup \{ + \infty \} $. The effective domain (or simply domain) of $f$ (written $\dom f$) is $\dom f := \{ \x \in \R^n : -\infty < f (\x) < +\infty \}$. For each $\x \in \dom f$, the Fr\'echet subdifferential of $f$ at $\x$, written $\hat{\partial} f (\x)$, is the set of vectors $\mathbf{g}^* \in \R^n$ such that 
    \begin{align*} 
        \liminf_{\substack{\y \neq \x \\ \y \to \x}} \frac{ f (\y) - f (\x) - \< \mathbf{g}^* , \y - \x \> }{\| \x - \y\|} \ge 0. 
    \end{align*} 
    If $\x \notin \dom f$, by convention $\hat{\partial} f (\x) =  \emptyset $. 
    
    The limiting subdifferential of $f$ at $\x$, written $\partial f (\x)$, is 
    \begin{align*} 
        \partial f (\x) 
        := 
        \{ \mathbf{g} \in \R^n : \exists \x_n \to \x, f (\x_n) \to f (\x), \mathbf{g}_n^* \in \hat{\partial} f (\x_n) \to \mathbf{g} \} . 
    \end{align*} 
    An element $\mathbf{g} \in \partial f (\x)$ is call a subgradient of $f$ at $\x$. 
    If $f$ is convex, then 
    \begin{align*}
        \partial f(\x) = \bigcap_{ \mathbf{z} \in \dom f }  \{ \mathbf{g} | f(\mathbf{z}) \ge f(\x) + \mathbf{g}^\top  ( \mathbf{z} -  \x ) \}.
    \end{align*}
\end{definition}


Next we review the elements of the proximal algorithm. The proximal algorithm is described by the following inclusion recursion:  
\begin{align}
    \x_{t+1} \in \arg \min_{\mathbf{z}} \left\{ f (\mathbf{z}) + \frac{1}{ 2 \eta_t} \| \mathbf{z} - \x_t \|^2 \right\} , \label{eq:proximal} 
\end{align} 
with a given $\x_0 \in \R^n$. 


In each iteration, the proximal algorithm solves an optimization problem whose solution set is compact and nonempty \cite{attouch2009convergence}. By the optimality condition (Theorem 10.1, \cite{rockafellar2009variational}), we know 
    $0 \in \partial \( f ( \x_{t+1}) + \frac{1}{ 2 \eta_t} \| \x_{t+1} - \x_t \|^2 \) .$
By the subadditivity property of subdifferential (e.g., Exercise 10.10, 
\cite{rockafellar2009variational}), we have 
\begin{align*}
    0 \in \partial \( f ( \x_{t+1}) + \frac{1}{ 2 \eta_t} \| \x_{t+1} - \x_t \|^2 \) \subseteq \{ \frac{1}{\eta} \( \x_{t+1} - \x_t \) \} + \partial f ( \x_{t+1}) , 
\end{align*}
where $+$ is the Minkowski sum when two summands are sets. Thus it holds that 
\begin{align} 
    \x_{t+1} = \x_t - \eta_t \mathbf{g}_{t+1}, \label{eq:proximal-update} 
\end{align} 
for some $\mathbf{g}_{t+1} \in \partial f ( \x_{t+1})$. 

We also recall the following two theorems.

\begin{theorem}[\cite{attouch2009convergence}] 
    \label{thm:ab} 
    Let $f$ satisfy Assumption \ref{assumption:nonsmooth} and let $\{ \x_t \}_{t \in \mathbb{N}}$ be generated by the proximal algorithm. If $\{ \x_t \}_{t \in \mathbb{N}}$ is bounded then it converges to a critical point of $ f $. 
\end{theorem}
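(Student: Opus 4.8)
The plan is to run the classical \Loj descent argument, adapted to the (possibly nonsmooth) proximal recursion; I would spell it out for completeness even though the statement is quoted from \cite{attouch2009convergence}. The two structural ingredients come from the proximal optimality conditions recalled just above the statement. Writing $g_{t+1} := \frac{1}{\eta_t}(x_t - x_{t+1}) \in \partial f(x_{t+1})$, testing the minimality of $x_{t+1}$ in (\ref{eq:proximal}) against the competitor $z = x_t$ gives the \emph{sufficient decrease} estimate $f(x_{t+1}) + \frac{1}{2\eta_t}\| x_{t+1} - x_t \|^2 \le f(x_t)$, while the update identity gives the \emph{relative error} estimate $\| g_{t+1}\| = \frac{1}{\eta_t}\| x_{t+1} - x_t\|$. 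Using $0 < \eta_- \le \eta_t \le \eta_+ < \infty$ and $\inf_{x \in \R^n} f(x) > -\infty$, the decrease estimate shows that $\{ f(x_t) \}_t$ is nonincreasing, hence converges to some $\ell \in \R$ (so $f(x_t) \ge \ell$ for all $t$), and that $\sum_t \| x_{t+1} - x_t\|^2 < \infty$; in particular $\| x_{t+1} - x_t\| \to 0$ and therefore $\| g_{t+1}\| \to 0$.

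Next I would analyse the limit set $\Omega$ of $\{ x_t \}_t$. Boundedness of $\{ x_t \}_t$ makes $\Omega$ nonempty and compact; $\| x_{t+1} - x_t\| \to 0$ makes it connected; continuity of $f$ on $\dom f$ together with $f(x_t) \to \ell$ forces $f$ to be identically $\ell$ on $\Omega$; and since $g_{t+1} \in \partial f(x_{t+1})$ with $\| g_{t+1}\| \to 0$ and the limiting subdifferential has closed graph, every point of $\Omega$ is a critical point of $f$. I would then \emph{uniformize} the \Loj inequality of Assumption \ref{assumption:nonsmooth} over $\Omega$: cover the compact set $\Omega$ by finitely many balls on each of which (\ref{eq:loj-nonsmooth}) holds, take $\theta$ to be the largest of the finitely many exponents, and use that $f$ equals the common value $\ell$ on $\Omega$ to obtain a single open neighbourhood $U \supseteq \Omega$ with $|f(x) - \ell|^\theta \le \| g\|$ for every $x \in U$ with $|f(x) - \ell| \le 1$ and every $g \in \partial f(x)$; since $f(x_t) \to \ell$, this inequality is available along the tail of the sequence once it enters $U$.

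The core of the argument is a finite-length (summability) estimate. Set $\varphi(s) := \frac{1}{1-\theta}s^{1-\theta}$ for $s > 0$, a concave increasing function with $\varphi'(s) = s^{-\theta}$. For every $t$ large enough that $x_{t-1}, x_t, x_{t+1} \in U$ and $f(x_t) > \ell$, concavity of $\varphi$, the sufficient-decrease estimate, and the uniform \Loj inequality applied at $x_t$ (using $\| g_t\| = \frac{1}{\eta_{t-1}}\| x_t - x_{t-1}\|$) give
\begin{align*}
\varphi\bigl(f(x_t) - \ell\bigr) - \varphi\bigl(f(x_{t+1}) - \ell\bigr)
&\ge \varphi'\bigl(f(x_t) - \ell\bigr)\bigl(f(x_t) - f(x_{t+1})\bigr) \\
&\ge \frac{1}{(f(x_t) - \ell)^{\theta}}\cdot\frac{\| x_{t+1} - x_t\|^2}{2\eta_t}
\ge \frac{\eta_-}{2\eta_+}\cdot\frac{\| x_{t+1} - x_t\|^2}{\| x_t - x_{t-1}\|}.
\end{align*}
Abbreviating $\Delta_t := \varphi(f(x_t) - \ell) - \varphi(f(x_{t+1}) - \ell) \ge 0$, this reads $\| x_{t+1} - x_t\|^2 \le \frac{2\eta_+}{\eta_-}\Delta_t\,\| x_t - x_{t-1}\|$, and applying $2\sqrt{ab} \le a + b$ gives $2\| x_{t+1} - x_t\| \le \frac{2\eta_+}{\eta_-}\Delta_t + \| x_t - x_{t-1}\|$. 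Rearranging to $\| x_{t+1} - x_t\| \le \frac{2\eta_+}{\eta_-}\Delta_t + \bigl(\| x_t - x_{t-1}\| - \| x_{t+1} - x_t\|\bigr)$ and summing from $t_0$ to $N$, the $\Delta_t$-terms telescope to at most $\varphi(f(x_{t_0}) - \ell)$ (since $\varphi \ge 0$) and the bracketed terms telescope to at most $\| x_{t_0} - x_{t_0 - 1}\|$, so $\sum_{t \ge t_0} \| x_{t+1} - x_t\| \le \frac{2\eta_+}{\eta_-}\varphi(f(x_{t_0}) - \ell) + \| x_{t_0} - x_{t_0 - 1}\| < \infty$. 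Hence $\{ x_t \}_t$ is Cauchy and converges to some $x_\infty$, which lies in $\Omega$ and is therefore a critical point of $f$.

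I expect the genuine difficulty to lie in making the uniformization and the accompanying \textbf{capture argument} simultaneously rigorous: the finite-length estimate is only valid while the iterates stay inside $U$, yet staying inside $U$ is exactly what that estimate is used to prove. The standard resolution is an induction on $t$ from an entry time $t_0$ chosen so large that $x_{t_0}$ is well inside $U$, $|f(x_{t_0}) - \ell| \le 1$, and the a priori length budget $\frac{2\eta_+}{\eta_-}\varphi(f(x_{t_0}) - \ell) + \| x_{t_0} - x_{t_0 - 1}\|$ is strictly smaller than the distance from $x_{t_0}$ to the complement of $U$; one then shows inductively that the partial sums $\sum_{s=t_0}^{t} \| x_{s+1} - x_s\|$ never exhaust this budget, so the iterates never leave $U$ and the displayed estimate keeps applying. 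The degenerate cases — $f(x_t) = \ell$ for some finite $t$ (the sequence is then eventually stationary) and $\theta = 0$, together with the behaviour of $\varphi$ near $0$ — need only routine bookkeeping.
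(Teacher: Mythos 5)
The paper offers no proof of this statement—it is quoted directly from Attouch and Bolte \cite{attouch2009convergence}—and your argument is correct and is essentially the proof given in that reference: sufficient decrease plus the relative-error bound $\|g_{t+1}\| = \frac{1}{\eta_t}\|x_{t+1}-x_t\|$, uniformization of the \L ojasiewicz inequality over the compact connected limit set, the concave desingularizer $\varphi(s)=\frac{1}{1-\theta}s^{1-\theta}$, the $2\sqrt{ab}\le a+b$ telescoping trick yielding finite length, and the capture/induction argument to keep the iterates in the neighbourhood where the inequality applies. No gaps beyond the routine bookkeeping you already flag.
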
 

\begin{theorem}[\cite{attouch2009convergence}] 
    \label{thm:ab-rate} 
    Let $f$ satisfy Assumption \ref{assumption:nonsmooth} and let $\{ \x_t \}_{t \in \mathbb{N}}$ be generated by the proximal algorithm. Let $f$ satisfy the \Loj inequality with \Loj exponent $\theta \in (\frac{1}{2}, 1)$. If $\{ \x_t \}_{t \in \mathbb{N}}$ is bounded, 
    then it holds that 
    \begin{align*}
        \| \x_t - \x_\infty \| \le O  ( t^{\frac{1 - \theta}{1 - 2\theta}} ) \quad \text{and} \quad \| \x_t - \x_{t+1} \| \le O  ( t^{\frac{1 - \theta}{1 - 2\theta}} ) , 
    \end{align*}
    where $ x_\infty $ is the limit of $ \{ \x_t \}_t $. 
\end{theorem}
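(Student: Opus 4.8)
\emph{Proof proposal.} Since this is the quantitative half of Attouch and Bolte's theorem, the plan is to reproduce their argument. It uses three facts. (1) Because $x_{t+1}$ minimizes $z\mapsto f(z)+\tfrac{1}{2\eta_t}\|z-x_t\|^2$, testing with $z=x_t$ gives the descent bound $f(x_t)-f(x_{t+1})\ge\tfrac{1}{2\eta_t}\|x_{t+1}-x_t\|^2$. (2) The optimality condition derived above yields $g_{t+1}:=\tfrac{1}{\eta_t}(x_t-x_{t+1})\in\partial f(x_{t+1})$, so $\|g_{t+1}\|=\tfrac{1}{\eta_t}\|x_{t+1}-x_t\|$. (3) Theorem~\ref{thm:ab} already gives $x_t\to x_\infty$ with $0\in\partial f(x_\infty)$, so I may fix $T_0$ with $x_t\in B(x_\infty,\mu)$ for all $t\ge T_0$, along which the \Loj inequality $\bigl(f(x_t)-f(x_\infty)\bigr)^\theta\le\|g\|$ holds for every $g\in\partial f(x_t)$. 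As usual I would normalize $f(x_\infty)=0$ and, after enlarging $T_0$, work on $t\ge T_0$ where $f(x_t)\ge 0$, $\{f(x_t)\}_t$ is nonincreasing and $f(x_t)\le 1$, taking a constant step size $\eta$ (the bounded-step case being identical).

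First I would pin down the rate of $f(x_t)$. Chaining (1), (2), (3) at $x_{t+1}$ gives $f(x_t)-f(x_{t+1})\ge\tfrac{\eta}{2}\|g_{t+1}\|^2\ge\tfrac{\eta}{2}f(x_{t+1})^{2\theta}$, so with $r_t:=f(x_t)$ (decreasing, $\le 1$, $\to 0$) one has $r_t-r_{t+1}\ge c\,r_{t+1}^{2\theta}$ with $2\theta>1$. Then I would run the standard scalar argument on $r_t^{1-2\theta}$: the mean value theorem gives $r_{t+1}^{1-2\theta}-r_t^{1-2\theta}\ge(2\theta-1)r_t^{-2\theta}(r_t-r_{t+1})$, and a dichotomy on whether $r_{t+1}\ge\tfrac{1}{2}r_t$ (the other case handled directly since $r_t^{1-2\theta}\ge 1$) yields $r_{t+1}^{1-2\theta}-r_t^{1-2\theta}\ge c'>0$. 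Telescoping gives $r_t^{1-2\theta}\ge c'(t-T_0)$, i.e.\ $f(x_t)=O\bigl(t^{1/(1-2\theta)}\bigr)$ — which is exactly the new estimate the section announces.

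Next I would bound $\|x_t-x_\infty\|$ and $\|x_{t+1}-x_t\|$ using the concave reparametrization $\varphi(s)=s^{1-\theta}$. Concavity gives $\varphi(f(x_t))-\varphi(f(x_{t+1}))\ge(1-\theta)f(x_t)^{-\theta}\bigl(f(x_t)-f(x_{t+1})\bigr)$; bounding the last factor below by $\tfrac{1}{2\eta}\|x_{t+1}-x_t\|^2$ via (1) and invoking the \Loj inequality at $x_t$ as $f(x_t)^{-\theta}\ge\|g_t\|^{-1}=\eta/\|x_t-x_{t-1}\|$ (with $g_t=\tfrac{1}{\eta}(x_{t-1}-x_t)$) leads to
\begin{align*}
\varphi(f(x_t))-\varphi(f(x_{t+1}))\ \ge\ \frac{1-\theta}{2}\cdot\frac{\|x_{t+1}-x_t\|^2}{\|x_t-x_{t-1}\|}\ \ge\ \frac{1-\theta}{2}\Bigl(2\|x_{t+1}-x_t\|-\|x_t-x_{t-1}\|\Bigr),
\end{align*}
the last step by $a^2/b\ge 2a-b$. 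Summing from index $t$ to $N$ and letting $N\to\infty$, the left side telescopes to $\varphi(f(x_t))\to 0$ (hence $\sum_s\|x_{s+1}-x_s\|<\infty$) and the right side to $\tfrac{1-\theta}{2}\bigl(S_t-\|x_t-x_{t-1}\|\bigr)$ with $S_t:=\sum_{s\ge t}\|x_{s+1}-x_s\|$, giving $S_t\le\tfrac{2}{1-\theta}f(x_t)^{1-\theta}+\|x_t-x_{t-1}\|$. Since $\|x_t-x_{t-1}\|^2\le 2\eta f(x_{t-1})$ by (1), plugging in the Step-1 rate makes both terms polynomially small, the $f(x_t)^{1-\theta}$ term being the slower-decaying one for $\theta\in(\tfrac{1}{2},1)$, so $S_t=O\bigl(t^{(1-\theta)/(1-2\theta)}\bigr)$; as $\|x_t-x_\infty\|\le S_t$ and $\|x_{t+1}-x_t\|\le S_t$, the claim follows. (Alternatively one applies Cauchy--Schwarz to $\sum_{s\ge t}\|x_{s+1}-x_s\|\le\sum_{s\ge t}\sqrt{\|x_s-x_{s-1}\|}\,\sqrt{\varphi(f(x_s))-\varphi(f(x_{s+1}))}$ to get $S_t\lesssim S_{t-1}^{1/2}f(x_t)^{(1-\theta)/2}$ and closes by induction.)

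The hard part will be the two quantitative steps. In Step~1, the obstacle is that the decrement is controlled by $f(x_{t+1})^{2\theta}$, not $f(x_t)^{2\theta}$, so the naive telescoping of $f(x_t)^{1-2\theta}$ stalls; the dichotomy, together with the a priori bound $f(x_t)\le 1$, is what repairs it. In Step~2, the obstacle is upgrading the bare summability of $\{\|x_{t+1}-x_t\|\}$ to the tail estimate $S_t=O\bigl(t^{(1-\theta)/(1-2\theta)}\bigr)$ — this needs the concave $\varphi$, the \Loj inequality evaluated one step back (controlling $f(x_t)^{-\theta}$ by $1/\|x_t-x_{t-1}\|$), and the inequality $a^2/b\ge 2a-b$. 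Everything else is routine once Theorem~\ref{thm:ab} provides the qualitative convergence.
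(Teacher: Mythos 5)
The paper gives no proof of Theorem \ref{thm:ab-rate}: it is quoted from \cite{attouch2009convergence} and used as a black box (in Proposition \ref{prop:large-t-nonsmooth}), so there is no internal argument to compare yours against. Your reconstruction is the standard Attouch--Bolte argument and its individual steps are sound: the strong descent inequality $f(x_t)-f(x_{t+1})\ge\frac{1}{2\eta}\|x_{t+1}-x_t\|^2$ from testing $z=x_t$ in the proximal subproblem, the identification $g_{t+1}=\frac{1}{\eta}(x_t-x_{t+1})\in\partial f(x_{t+1})$, the scalar recursion $r_t-r_{t+1}\ge c\,r_{t+1}^{2\theta}$ yielding $f(x_t)-f(x_\infty)=O(t^{1/(1-2\theta)})$, and the concave reparametrization combined with $a^2/b\ge 2a-b$ yielding the tail bound $S_t\le\frac{2}{1-\theta}\,(f(x_t)-f(x_\infty))^{1-\theta}+\|x_t-x_{t-1}\|$.

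The genuine issue is that what this argument delivers is not what the statement asserts. Your chain gives $\|x_t-x_\infty\|\le S_t=O\bigl(t^{\frac{1-\theta}{1-2\theta}}\bigr)$ and $\|x_{t+1}-x_t\|\le\sqrt{2\eta (f(x_t)-f(x_\infty))}=O\bigl(t^{\frac{1}{2(1-2\theta)}}\bigr)$, whereas the theorem as printed claims $O\bigl(t^{\frac{2\theta}{1-2\theta}}\bigr)$ for both. For $\theta\in(\frac{1}{2},1)$ one has $\frac{2\theta}{1-2\theta}<\frac{1}{2(1-2\theta)}<\frac{1-\theta}{1-2\theta}<0$ (e.g.\ $-3<-1<-\frac{1}{2}$ at $\theta=\frac{3}{4}$), so the printed bound is strictly stronger than anything your proof establishes --- and also stronger than Theorem 5 of \cite{attouch2009convergence}, whose exponent for $\|x_t-x_\infty\|$ is exactly $-\frac{1-\theta}{2\theta-1}=\frac{1-\theta}{1-2\theta}$. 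The printed bound is in fact false in general: for $f(x)=|x|^{1/(1-\theta)}$ the proximal iterates satisfy $x_t-x_{t+1}=\eta p\,x_{t+1}^{p-1}$ with $p=1/(1-\theta)$ and decay exactly like $t^{-1/(p-2)}=t^{\frac{1-\theta}{1-2\theta}}$, not like $t^{\frac{2\theta}{1-2\theta}}$. So the exponent in the statement appears to be a misprint of the Attouch--Bolte exponent; your proof is the right proof of the corrected statement, but you should flag the mismatch rather than silently substitute your exponent --- especially since Proposition \ref{prop:large-t-nonsmooth} relies on the stronger printed rate to conclude $w_t\,t^{\frac{2\theta}{2\theta-1}}\to 0$, and that deduction does not follow from the rate your argument actually proves.
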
 

Same as \cite{attouch2009convergence}, we will make the following assumptions on $f$. All items in Assumption \ref{assumption:nonsmooth} are assumed in \cite{attouch2009convergence}. 

\begin{assumption}[\cite{attouch2009convergence}] 
    \label{assumption:nonsmooth}
    The function $f$ satisfies
    \begin{enumerate} 
        \item $f$ is continuous on $\dom f$; \if\submission1 \textcolor{red}{($\mathcal{H}_2$ in \cite{attouch2009convergence})} \fi 
        \item For any $ \x_* \in \R^n $ with $ \partial f \ni \0 $, it holds that: there exist $ \kappa, \mu > 0$, and $\theta \in [0,1)$, such that 
        \begin{align} 
            | f (\x) - f (\x_*) |^\theta 
            \le 
            \kappa \| \mathbf{g} \|, \quad \forall x \in B (\x_*, \mu) , \forall g \in \partial f (\x), \label{eq:loj-nonsmooth}
        \end{align} 
        where $B (\x_*, \mu)$ is the ball of radius $\mu$ centered at $\x_*$. Without loss of generality, we let $\kappa = 1$ to avoid clutter. \if\submission1
        \textcolor{red}{(The \Loj inequality. Appeared in Eq. 5 in \cite{attouch2009convergence}.)} \fi 
        \item $\inf_{\x \in \R^n} f (\x) > -\infty$; 
        \if\submission1
        \textcolor{red}{($\mathcal{H}_1$ in \cite{attouch2009convergence})} \fi 
        \item Let $ \{ \x_t \}_t $ be the sequence generated by the GD algorithm. We assume $ \{ \x_t \}_t $ is bounded. 
        \if\submission1
        \textcolor{red}{(Stated as a condition in Proposition 1, below item (iii), in \cite{attouch2009convergence}.)} \fi 
        \item There exists $ \eta_-, \eta_+ \in (0,\infty) $ such that $  \eta_- \le \eta_t \le \eta_+  $ for all $t$. 
        \if\submission1
        \textcolor{red}{(Below Eq. 3 in \cite{attouch2009convergence}.)} \fi 
    \end{enumerate} 
\end{assumption} 

\if\submission1 
\textcolor{red}{(The red remarks are added for review purpose only.)}
\fi 

In the above assumption, (\ref{eq:loj-nonsmooth}) is the \Loj inequality. Compared to the one in Definition \ref{def:loj}, gradient is replaced by subgradient. 


\subsection{Convergence of the Proximal Algorithm}

Similar to the stochastic case, we focus on the convergence analysis for $\{ f (\x_t) \}_t$. We start with the following proposition.

\begin{proposition} 
    \label{prop:conv-like} 
    Let $ \{ \x_t \}_{t \in \mathbb{N} } $ be the bounded sequence generated by the proximal algorithm. Let $ \{ \mathbf{g}_t \}_{t \in \mathbb{N} } $ be the sequence of subgradients that governs the iteration (\ref{eq:proximal-update}). Then there exists a sequence $ \{ w_t \}_{t} \subseteq [0, \infty ) $ such that 
    \begin{itemize}
        \item $\lim\limits_{t \to \infty} \frac{w_t}{ \| \x_t - \x_{t+1} \| } = 0 $; and $ f (\x_t) \ge f (\x_{t+1}) + \mathbf{g}_{t+1}^\top \( \x_t - \x_{t+1} \) - w_t, \; \forall t \in \mathbb{N} .  $
        \item If $ f$ is convex, then $ f (\x_t) \ge f (\x_{t+1}) + \mathbf{g}_{t+1}^\top \( \x_t - \x_{t+1} \) $. 
    \end{itemize}
\end{proposition}


\begin{proof}
    
    
    
    By Definition \ref{def:subdifferential}, we have 
    \begin{align*}
        f (\x_t) - f (\x_{t+1}) - \< \mathbf{g}_{t+1} , \x_t - \x_{t+1} \> 
        \ge - o \( \| \x_t - \x_{t+1} \| \) , \quad \forall t, 
    \end{align*} 
    which concludes the proof for the first item. Similarly by Definition \ref{def:subdifferential}, we can prove the second item. 
    
\end{proof}



Next, we state below some numerical properties when $t$ is large. 

\begin{proposition} 
    \label{prop:large-t-nonsmooth} 
    For any $\theta \in (\frac{1}{2} , 1)$, there exists constants $ T_0 $ and  $C_0$ such that 
    \begin{enumerate}[label=\textit{(\roman*)}] 
        \item $ \frac{ 2 }{ \eta_+ } \le \frac{ 2 }{ \eta_- } \le (2\theta - 1) C_0^{2\theta - 1} $; 
        \item For all $ t \ge T_0 $, it holds that 
            $\| \x_t - \x_\infty \| \le \mu, $
        where $\mu$ is defined as in Definition \ref{assumption:nonsmooth}. 
        \item For all $t \ge T_0$, it holds that
        \begin{align*} 
            \frac{ C_0 }{ ( 2\theta - 1 ) } t^{ \frac{ 2 \theta  }{ 1 - 2\theta } } \le \eta_- C_0^{2\theta} (t+1)^{ \frac{ 2 \theta }{ 1 - 2\theta } } \le \eta_t C_0^{2\theta} (t+1)^{ \frac{ 2 \theta }{ 1 - 2\theta } } . 
        \end{align*} 
        \item $ f (\x_{T_0}) - f (\x_\infty) \le C_0 T_0^{ \frac{ 1 }{1 - 2 \theta } } $. 
    \end{enumerate} 
\end{proposition}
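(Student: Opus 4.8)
The plan is to establish the four items by fixing $\theta\in(\tfrac12,1)$ and then choosing $T_0$ and $C_0$ in a staged order: first select $T_0$ large using only asymptotic information about the proximal iterates $\{x_t\}$ (information that does not involve $C_0$), and afterwards select $C_0$ large; the crucial observation is that every remaining constraint turns out to be a \emph{lower} bound on $C_0$, so these constraints are mutually compatible. Items $(i)$, $(ii)$ and $(iv)$ are routine. Item $(ii)$ follows directly from Theorem~\ref{thm:ab}: the bounded sequence $\{x_t\}$ converges to a critical point $x_\infty$, so $\|x_t-x_\infty\|\le\mu$ for every $t$ past some threshold $T_0^{(1)}$. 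Item $(i)$ holds for every large $C_0$, since $2\theta-1>0$ makes $(2\theta-1)C_0^{2\theta-1}\to\infty$. Item $(iv)$ holds once $T_0$ is fixed, because $f(x_{T_0})$ is then a fixed finite number and $T_0^{1/(1-2\theta)}$ a fixed positive number (note $\tfrac{1}{1-2\theta}<0$), so it is enough to take $C_0\ge f(x_{T_0})\,T_0^{1/(2\theta-1)}$.

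The substantive work is item $(iii)$. I would first rewrite $(i)$: multiplying $\tfrac{2}{\eta}\le(2\theta-1)C_0^{2\theta-1}$ through by $\tfrac{\eta C_0}{2(2\theta-1)}$ gives the equivalent inequality $\tfrac{C_0}{2\theta-1}\le\tfrac{\eta C_0^{2\theta}}{2}$. The key input is a decay estimate for $w_t$: by Proposition~\ref{prop:conv-like}, $w_t=o(\|x_t-x_{t+1}\|)$, and by Theorem~\ref{thm:ab-rate} (applicable since $\theta\in(\tfrac12,1)$), $\|x_t-x_{t+1}\|=O\!\big(t^{2\theta/(1-2\theta)}\big)$; chaining these yields $w_t=o\!\big(t^{2\theta/(1-2\theta)}\big)$. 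Hence there is a threshold $T_0^{(2)}$, not depending on $C_0$, past which $w_t\le t^{2\theta/(1-2\theta)}$, and, since $(1+1/t)^{2\theta/(1-2\theta)}\to1$, a threshold $T_0^{(3)}$ past which $(t+1)^{2\theta/(1-2\theta)}\ge\tfrac34\,t^{2\theta/(1-2\theta)}$. Choosing $T_0\ge\max\{T_0^{(1)},T_0^{(2)},T_0^{(3)}\}$, then for $t\ge T_0$ the left-hand side of $(iii)$ is at most $\big(\tfrac{\eta C_0^{2\theta}}{2}+1\big)\,t^{2\theta/(1-2\theta)}$ and the right-hand side is at least $\tfrac34\,\eta C_0^{2\theta}\,t^{2\theta/(1-2\theta)}$, so $(iii)$ follows as soon as $\tfrac{\eta C_0^{2\theta}}{2}+1\le\tfrac34\,\eta C_0^{2\theta}$, that is $C_0^{2\theta}\ge\tfrac{4}{\eta}$ --- once more only a lower bound on $C_0$. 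Consequently, with $\theta$ and $T_0$ fixed as above, any $C_0$ large enough to meet $(i)$, the bound $C_0^{2\theta}\ge4/\eta$, and $(iv)$ simultaneously completes the proof.

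The main obstacle is precisely this control of $w_t$ in item $(iii)$: one must route the argument through the Fr\'echet-subdifferential estimate of Proposition~\ref{prop:conv-like} and the Attouch--Bolte convergence rate of Theorem~\ref{thm:ab-rate} to see that $w_t$ decays at least as fast as $t^{2\theta/(1-2\theta)}$ --- which is exactly the order of the error term that the subsequent induction for $f(x_t)=O\!\big(t^{1/(1-2\theta)}\big)$ will need to absorb. Everything else reduces to elementary bookkeeping with the negative exponents $\tfrac{1}{1-2\theta}$ and $\tfrac{2\theta}{1-2\theta}$ together with one application of the mean value theorem to $s\mapsto s^{1/(1-2\theta)}$.
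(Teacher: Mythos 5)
Your proposal is correct and follows essentially the same route as the paper: both arguments invoke Theorem~\ref{thm:ab} for item \textit{(ii)}, chain Proposition~\ref{prop:conv-like} with Theorem~\ref{thm:ab-rate} to get $w_t = o\big(t^{2\theta/(1-2\theta)}\big)$ for item \textit{(iii)}, and observe that items \textit{(i)}, \textit{(iii)}, \textit{(iv)} impose only lower bounds on $C_0$ once $T_0$ is fixed. Your version is in fact slightly more careful than the paper's (which verifies \textit{(iii)} via a limit of a ratio being $\le \tfrac12$ without making explicit that the threshold can be chosen independently of $C_0$), since you isolate thresholds $T_0^{(2)}, T_0^{(3)}$ that do not depend on $C_0$ before choosing $C_0$.
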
 

\begin{proof}


    
    Clearly we can find a constant $C_0'$ such that $ 2 \eta_+ \le (2\theta - 1) C^{2\theta - 1} $ for all $C \ge C_0'$, and this $C_0'$ does not depend on $T_0$. Thus item $(i)$ can be easily satisfied. 
    By Theorem \ref{thm:ab}, we can find $T_0$ so that item $(ii)$ is true. 

    By item $(i)$, for any $C \ge C_0'$, it holds that  
        $ 
        \lim\limits_{t \to \infty} \frac{ \frac{ C }{ 2\theta - 1  } t^{ \frac{ 2 \theta  }{ 1 - 2\theta } } }{ \eta_- C^{2\theta} (t+1)^{ \frac{ 2 \theta }{ 1 - 2\theta } } } 
        = 
        \frac{ \frac{ C }{ \eta_- (2\theta - 1 ) } }{ C^{2\theta} } \le \frac{1}{2} . 
        $
    Thus we can find $T_0$ and $C_0'$ that satisfies item $ (iii) $. 
    For item $(iv)$, given a $T_0$, we can find $C_0''$ such that $ f (x_{T_0}) \le C_0'' T_0^{ \frac{1 }{1 - 2 \theta } } $, since the sequence $ \{ f (x_t) \}_{t \in \mathbb{N}} $ is absolutely bounded (Theorem \ref{thm:ab}). 
    Letting $C_0 = \max \{ C_0', C_0'' \}$ concludes the proof. 
\end{proof}

\begin{theorem} 
    \label{thm:rate-nonsmooth}
    Instate Assumption \ref{assumption:nonsmooth}. 
    Let $\{ \x_t \}_{t \in \mathbb{N}}$ generated by the proximal algorithm. If $\{ \x_t \}_{t \in \mathbb{N}}$ is bounded then it converges to a critical point of $ f $. Let $ \x_\infty$ be the limit of $\{ \x_t \}_{t \in \mathbb{N}}$. If $\theta \in (\frac{1}{2},1)$ and $f $ is convex, then it holds that 
    $    f (\x_t) - f ( \x_\infty ) \le O \( t^{ \frac{1}{1-2\theta} } \) . $
\end{theorem}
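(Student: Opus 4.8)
The plan is to run essentially the same induction as in the proof of Theorem \ref{thm:conv-nonstoc} (the smooth gradient-descent case), but with the quadratic upper bound replaced by the one-sided descent estimate of Proposition \ref{prop:conv-like} and with the error term $w_t$ tracked explicitly. Convergence of $\{x_t\}_t$ to a critical point $x_\infty$ is immediate from Theorem \ref{thm:ab}. Following Remark \ref{rem:zero}, I would assume $f(x_\infty)=0$ without loss of generality; since the proximal iterates satisfy $f(x_{t+1})+\frac{1}{2\eta}\|x_{t+1}-x_t\|^2\le f(x_t)$, the sequence $\{f(x_t)\}_t$ is nonincreasing, so $f(x_t)\ge f(x_\infty)=0$ for every $t$.

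First I would derive a (self-referential) descent recursion. Writing the proximal step as $x_t-x_{t+1}=\eta g_{t+1}$ with $g_{t+1}\in\partial f(x_{t+1})$ and substituting into Proposition \ref{prop:conv-like} gives $f(x_t)\ge f(x_{t+1})+\eta\|g_{t+1}\|^2-w_t$. For $t\ge T_0$ the iterate $x_{t+1}$ lies in $B(x_\infty,\mu)$ (Proposition \ref{prop:large-t-nonsmooth}(ii)), so the nonsmooth \Loj inequality (\ref{eq:loj-nonsmooth}) applies at $x_{t+1}$ and yields $\|g_{t+1}\|\ge f(x_{t+1})^\theta$, i.e.\ $\|g_{t+1}\|^2\ge f(x_{t+1})^{2\theta}$. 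Combining these,
\begin{align*}
f(x_{t+1}) \le f(x_t) - \eta\, f(x_{t+1})^{2\theta} + w_t , \qquad t\ge T_0 .
\end{align*}

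Next comes the induction. Fix $T_0,C_0$ as in Proposition \ref{prop:large-t-nonsmooth}; I claim $f(x_t)\le C_0 t^{\frac{1}{1-2\theta}}$ for all $t\ge T_0$, the base case $t=T_0$ being item $(iv)$. For the step, assume $f(x_t)\le C_0 t^{\frac{1}{1-2\theta}}$ and, for contradiction, suppose $f(x_{t+1})>C_0(t+1)^{\frac{1}{1-2\theta}}$. Since $2\theta>1$ the map $z\mapsto z^{2\theta}$ is increasing, so $f(x_{t+1})^{2\theta}>C_0^{2\theta}(t+1)^{\frac{2\theta}{1-2\theta}}$; plugging this and the induction hypothesis into the recursion, then using the mean value theorem bound $t^{\frac{1}{1-2\theta}}-(t+1)^{\frac{1}{1-2\theta}}\le\frac{1}{2\theta-1}t^{\frac{2\theta}{1-2\theta}}$ exactly as in (\ref{eq:taylor-nonstoc}), gives
\begin{align*}
f(x_{t+1}) < C_0(t+1)^{\frac{1}{1-2\theta}} + \frac{C_0}{2\theta-1}t^{\frac{2\theta}{1-2\theta}} - \eta C_0^{2\theta}(t+1)^{\frac{2\theta}{1-2\theta}} + w_t .
\end{align*}
By Proposition \ref{prop:large-t-nonsmooth}(iii) the three trailing terms sum to at most $0$, so $f(x_{t+1})<C_0(t+1)^{\frac{1}{1-2\theta}}$, contradicting the assumption. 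Hence $f(x_{t+1})\le C_0(t+1)^{\frac{1}{1-2\theta}}$, which closes the induction and gives $f(x_t)\le O(t^{\frac{1}{1-2\theta}})$.

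The main obstacle is the self-referential nature of the descent recursion: $f(x_{t+1})$ sits inside the $2\theta$-power on the right-hand side, so one cannot simply iterate it forward. Running the induction by contradiction resolves this, since assuming that $f(x_{t+1})$ overshoots the target is precisely what lets us lower-bound the subtracted term $\eta f(x_{t+1})^{2\theta}$. The second thing to watch is the magnitude of the perturbation $w_t$: the argument needs $w_t = o(t^{\frac{2\theta}{1-2\theta}})$, which follows by combining $w_t=o(\|x_t-x_{t+1}\|)$ from Proposition \ref{prop:conv-like} with the Attouch--Bolte rate $\|x_t-x_{t+1}\|\le O(t^{\frac{2\theta}{1-2\theta}})$ of Theorem \ref{thm:ab-rate} (cf.\ (\ref{eq:wt-lim})) --- and this is exactly what is packaged into Proposition \ref{prop:large-t-nonsmooth}(iii).
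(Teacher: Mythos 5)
Your proposal is correct and follows essentially the same route as the paper: the same descent inequality from Proposition \ref{prop:conv-like} combined with the nonsmooth \Loj inequality, the same mean-value-theorem estimate, and the same induction with constants supplied by Proposition \ref{prop:large-t-nonsmooth}. The only cosmetic difference is that you resolve the implicit recursion (with $f(x_{t+1})$ appearing inside the $2\theta$-power) by contradiction, whereas the paper invokes the strict monotonicity of $z \mapsto z + \eta z^{2\theta}$ on $(0,\infty)$ --- these are logically equivalent.
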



\begin{proof} 
    
    Since $f$ is convex, we have 
    \begin{align*} 
        f (\x_{t}) 
        \ge& \;  
        f (\x_{t+1}) + \mathbf{g}_{t+1}^\top \( \x_t - \x_{t+1} \) 
        =
        f (\x_{t+1}) + {\eta_t} \| \mathbf{g}_{t+1} \|^2 . 
    \end{align*} 
    
    
    Since $ \{ \x_t \} $ converges (Theorem \ref{thm:ab}) and the \Loj inequality holds with exponent $\theta$, there exists $T_0$, such that for all $t \ge T_0$,  
    \begin{align*} 
        f (\x_t) 
        \ge 
        f (\x_{t+1}) + {\eta_t} \| \mathbf{g}_{t+1} \|^2 
        \ge 
        f (\x_{t+1}) + {\eta_t} \( f (\x_{t+1}) - f (\x_\infty) \)^{2\theta} ,  
    \end{align*} 
    which gives
    \begin{align} 
        f ( \x_t) - f ( \x_\infty) 
        \ge 
        f ( \x_{t+1}) - f ( \x_\infty) + {\eta_t} \( f ( \x_{t+1}) - f ( \x_\infty) \)^{2\theta}. \label{eq:recur-nonsmooth} 
    \end{align} 
    

    
    
    For any positive integer $t$, define
        $h (s) = (t + s )^{ \frac{ 1 }{1 - 2\theta} } .$ 
    By mean value theorem, one has $h (1) = h (0) + h'(z)$ for some $z \in [0,1]$. Thus we have 
    \begin{align} 
        (t+1)^{ \frac{ 1 }{1 - 2\theta} } 
        = 
        t^{ \frac{ 1 }{1 - 2\theta} } + \frac{ 1 }{1 - 2\theta} ( t + z )^{ \frac{ 2\theta }{1 - 2\theta} } 
        \ge 
        t^{ \frac{ 1 }{1 - 2\theta} } + \frac{ 1 }{1 - 2\theta} t^{ \frac{ 2\theta }{1 - 2\theta} } . \label{eq:for-ind-nonsmooth} 
    \end{align}

    Next we use induction to prove the convergence rate. By item $ (iv) $ in Proposition \ref{prop:large-t-nonsmooth}, we can find $C_0$ and $T_0$ such that $ f (\x_{T_0}) - f (\x_\infty) \le C_0 T_0^{ \frac{1}{1 - 2 \theta} } $. Inductively, if $f (\x_t) - f (\x_\infty) \le C_0 t^{\frac{1}{1-2\theta}} $ ($t \ge T_0$), then by (\ref{eq:recur-nonsmooth}) it holds that 
    \begin{align} 
        &\; f (\x_{t+1}) - f (\x_\infty) + \eta_t ( f (\x_{t+1}) - f (\x_\infty) )^{2\theta} 
        \le  
        f (\x_t) - f (\x_\infty)  \nonumber \\
        \le& \;  
        C_0 t^{\frac{1}{1-2\theta}} 
        \le  
        C_0 (t+1)^{\frac{1}{1-2\theta}} + C_0 \frac{1 }{2\theta - 1} t^{\frac{2\theta}{1-2\theta}} \nonumber \\ 
        \le& \;  
        C_0 (t+1)^{\frac{1}{1-2\theta}} + \eta_t C_0^{2\theta} (t+1)^{\frac{2\theta}{1-2\theta}}, \label{eq:for-monotone} 
    \end{align} 
    where the second last inequality uses (\ref{eq:for-ind-nonsmooth}) and the last inequality uses $(iii)$ in Proposition \ref{prop:large-t-nonsmooth}. 
    
    Since the function $x \mapsto x + \eta_t x^{2\theta}$ is monotonic and strictly increasing on $(0, \infty)$, the above inequality (\ref{eq:for-monotone}) implies 
    $
        f ( \x_{t+1}) - f ( \x_\infty) \le C_0 (t+1)^{\frac{1}{1-2\theta}} , 
    $ 
    which concludes the proof. 
\end{proof}

\begin{remark}
    Recently, \cite{https://doi.org/10.1112/blms.12586} constructed a convex \Loj function on the plane that fails the Thom's gradient conjecture. The result in \cite{https://doi.org/10.1112/blms.12586} is somewhat pessimistic since it shows that the gradient flow of \Loj functions can be inevitably spiral even if the function is convex. By Theorems \ref{thm:rate} and \ref{thm:rate-nonsmooth}, we show that the function value $\{ f (\x_t) - f (\x_\infty) \}_{t \in \mathbb{N}}$ can converge at a rate much faster than $ \| \x_t - \x_\infty \| $ (when $\theta \in (\frac{1}{2}, 1)$). This suggests that, for \Loj functions, even though the gradient flow is spiral, the function values can converge at a reasonable rate. 
    
\end{remark}

\section{Empirical Studies} 
\label{sec:exp} 

In this section, we empirically study the performance of the SZGD algorithm. All experiments are carried out on the following test functions $ F_1 $ and $F_2$ defined over $\R^{30}$ 
\begin{align*} 
    F_1 (\x) = \( \x^\top \mathbf{Q} \x \)^{3/4} \quad \text{and} \quad F_2 (\x) = \( \x^\top \mathbf{Q} \x \)^{1/4} , 
\end{align*} 
where $Q \in \R^{30 \times 30}$ is a PSD matrix with eigenvalues following exponential distribution with $p.d.f.$ $f_{exp}(x) = \frac{1}{5} \exp \( - \frac{x}{5} \) \Ind_{ \[ x \ge 0 \] }$ and eigenvectors independently sampled from the unit sphere. The results are summarized in Figures \ref{fig:F} and \ref{fig:sample}. To avoid numerical instability, we set $\delta_t = \max \( 0.1 \times 2^{-t} , 0.00001 \)$ in all numerical experiments. 

\begin{remark}
    In this experiments, we compare the results of SZGD with GD. Strictly speaking, this is not a fair comparison since SZGD uses only zeroth-order information while GD can use first-order information. 
\end{remark}



Key observations from Figure \ref{fig:F} are: 
\begin{itemize}[align=left,leftmargin=*]  
    \item In general, $ \{ f (\x_t) \}_t $ converges faster than $\{ \| \x_t - \x_\infty \| \}_t$, which agrees with our theoretical results. 
    \item Figure \ref{fig:f1-1} and \ref{fig:f1-2} show that, on test function $F_1$, with fixed step size $\eta$, SZGD with larger values of $k$ in general converges faster given a fixed number of iterations. However, the gap between different choices of $k $ is not significant. 
\end{itemize}

Figure \ref{fig:F} also provides other insights. To gain some intuition, we consider the landscape of the test function $ F_2 $. While fully visualizing $F_2$ is impossible, we can consider a 1-d version of $ F_2 $: $f (x) = \sqrt{|x|}$, whose graph is shown below in Figure \ref{fig:toy}. Note that $ f (x) = \sqrt{|x|} $ is indeed a 1-d version of $F_2$, since $ f (x) = \sqrt{|x|} = (x^2)^{1/4} $. This nonconvex function has the following properties, all highly aligned with empirical observations. 
\begin{itemize}[align=left,leftmargin=*] 
    \item When $ \x $ is far from the origin, the function's surface is relatively flat. In this case, randomness in the gradient, which may increase the magnitude of gradient, can speedup the convergence. 
    In Figure \ref{fig:f2-1}, we observe that, \emph{in terms of $\| \x_t - \x_\infty\|$}, SZGD algorithms converge faster than GD at the beginning, which agrees with the landscape of $F_2$. More importantly, \emph{in terms of $ f (\x_t) - f ( \x_\infty )$}, SZGD converges faster than GD, as shown in Figure \ref{fig:f2-2}. 
    \item At zero, the function is not differentiable, and thus not $L$-smooth. This means that the gradient estimator may not be accurate near zero. In Figure \ref{fig:f2-1}, we observe that GD converges to a more optimal value \emph{in terms of $\| \x - \x_\infty \|$} at the end, which agrees with the non-differentiability of $F_2$ at zero. 
    \item Near zero, the gradient of the function is not continuous. Therefore, the trajectory of the GD algorithm oscillates near zero. For the 1-d example in Figure \ref{fig:toy}, the trajectory of GD will jump back and forth around zero, but hits zero with little chance. In Figure \ref{fig:f2-1}, we observe that the $\| \x \|$ value of DG is fuzzy as it goes to zero, which agrees with the shape of $F_2$ near zero. 
\end{itemize} 

We also empirically study the convergence rates versus the number of function evaluations. Note that one iteration may require more than one function evaluations: $k$ random orthogonal directions are sampled and $2k$ function evaluations are needed to obtain the gradient estimator defined in (\ref{eq:def-grad-est}). The convergence results versus number of function evaluations are shown in Figure \ref{fig:sample}. Some observations from Figure \ref{fig:sample} include
\begin{itemize}[align=left,leftmargin=*] 
    \item Given the same learning rate $\eta_t = \eta$ and the same number of function evaluations, in terms of \emph{function evaluations}, the sequence $ \{ f (\x_t) \}_t $ converges faster when $k$ is smaller. 
    \item Given the same learning rate $\eta_t = \eta$ and the same number of function evaluations, the sequence $ \{ \| \x_t - \x_\infty \| \}_t $ converges fastest when $ k = 10 $ or $k = 1$ for both $F_1$ and $F_2$. This is an intriguing observation, and suggests that there might be some fundamental relation between number of function evaluations needed and convergence of $ \{ \| \x_t - \x_\infty \| \}_t $. 
    \item When measured against number of function evaluations, $ \{ f (x_t) \}_t $ converges faster than $\{ \| \x_t - \x_\infty \| \}_t$. This is similar to the observations in Figure \ref{fig:F}. 
\end{itemize}


\begin{figure}[] 
    \centering 
    \subfloat[]{
    \includegraphics[width = 0.32\textwidth]{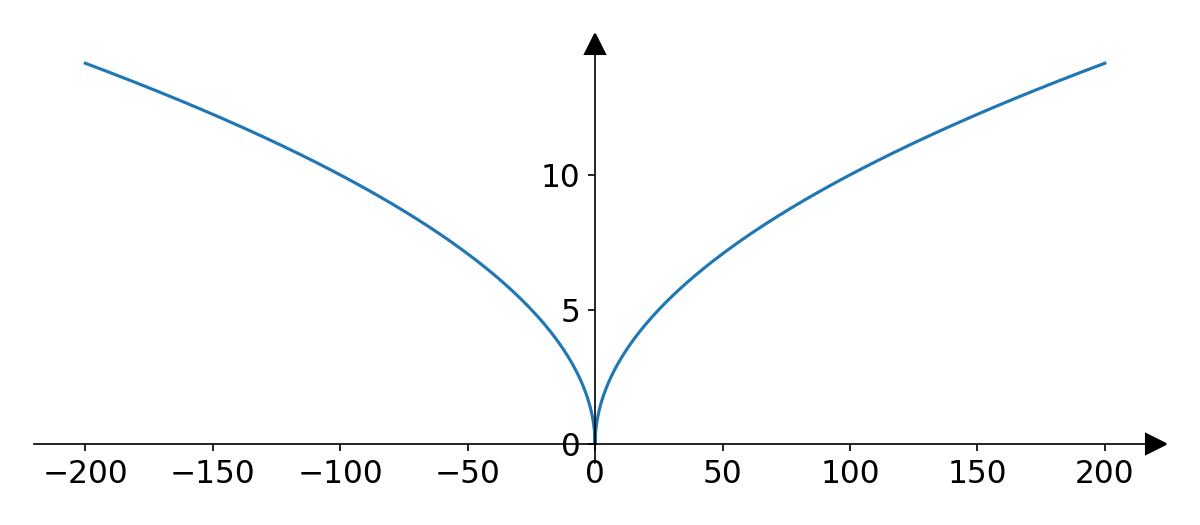} \label{fig:toy}}
    \subfloat[]{\includegraphics[width= 0.32\textwidth]{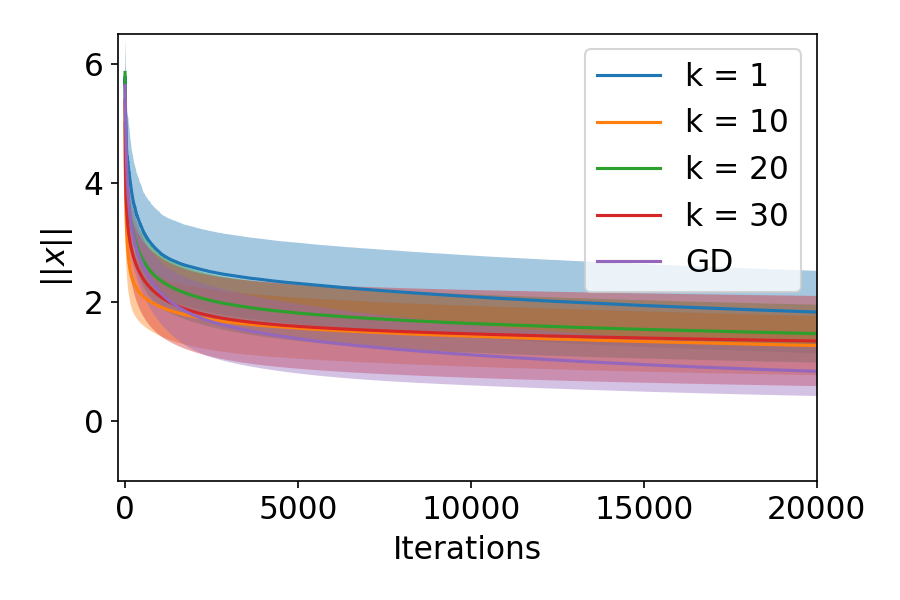} \label{fig:f1-1}}\,
    \subfloat[]{\includegraphics[width= 0.32\textwidth]{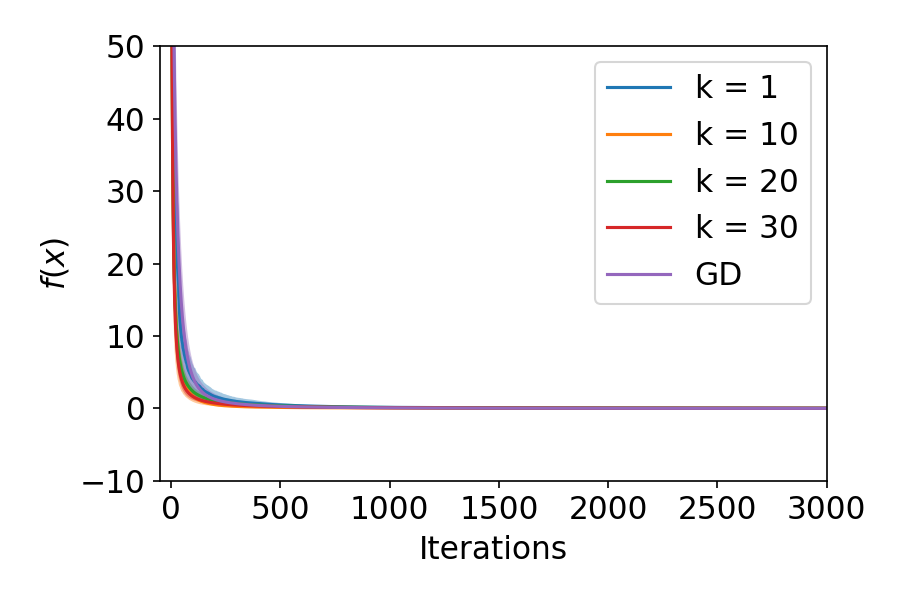}\label{fig:f1-2}} \\ 
    \subfloat[]{\includegraphics[width= 0.32\textwidth]{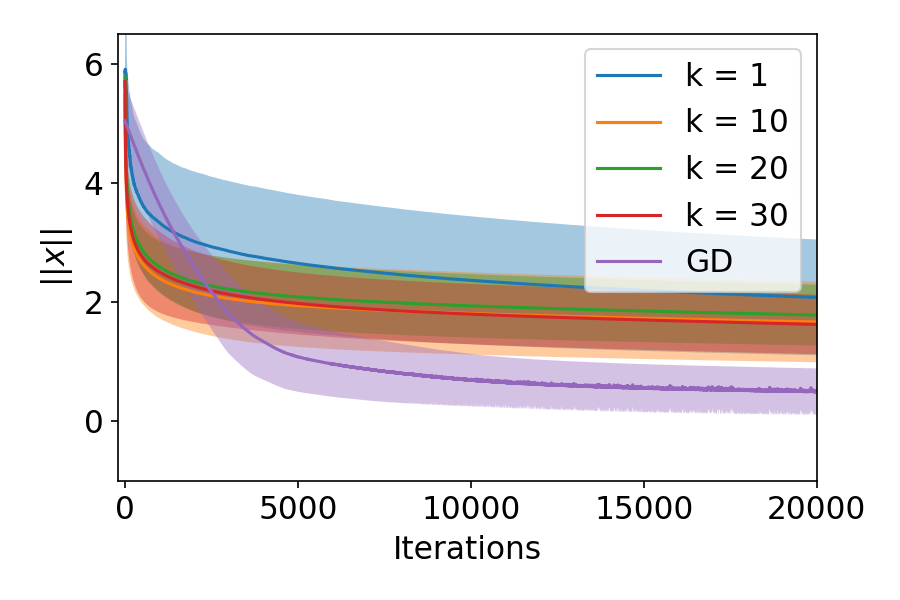}\label{fig:f2-1}} \,
    \subfloat[]{\includegraphics[width= 0.32\textwidth]{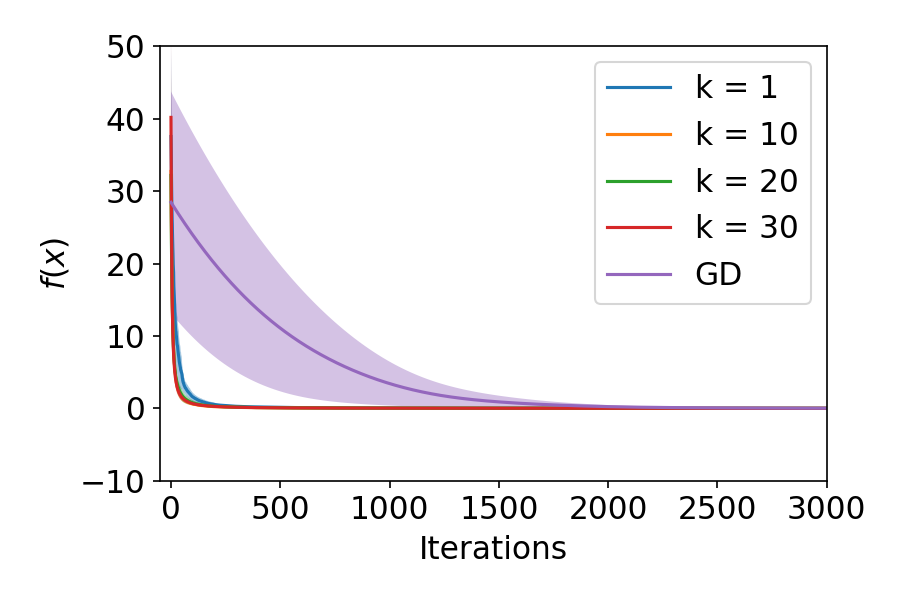}\label{fig:f2-2}} 
    \caption{Subfigure (a) plots the function $f (x) = \sqrt{|x|}$. Subfigures (b) and (c) (resp. (d) and (e)) plot results of SZGD and GD on test function $F_1$ (resp. $F_2$). The lines labeled with $k = 1$ (resp. $k=10$, etc.) show results of SZGD with $k = 1$ (resp. $k=10, etc.$). The line labeled GD plots results of the gradient descent algorithm. For all experiments, the step size $\eta_t$ is set to $0.005$ for all $t$. 
    Subfigures (b) and (d) show observed convergence rate results for $\{ \| \x_t - \x_\infty \|\}_t$ ($x_\infty = 0$); Subfigure (c) (resp. (e)) shows observed convergence rate results for $\{ \| F_1 (x_t) - F_1 (x_\infty) \|\}_t$ (resp. $\{ \| F_2 (x_t) - F_2 (x_\infty) \|\}_t$). 
    The solid lines show average results over 10 runs. The shaded areas below and above the solid lines indicate 1 standard deviation around the average. Since the starting point $\x_0$ is random, the trajectory of GD is also random.\label{fig:F}} 
\end{figure}

\begin{figure}[] 
    \centering 
    \subfloat[]{\includegraphics[width= 0.24\textwidth]{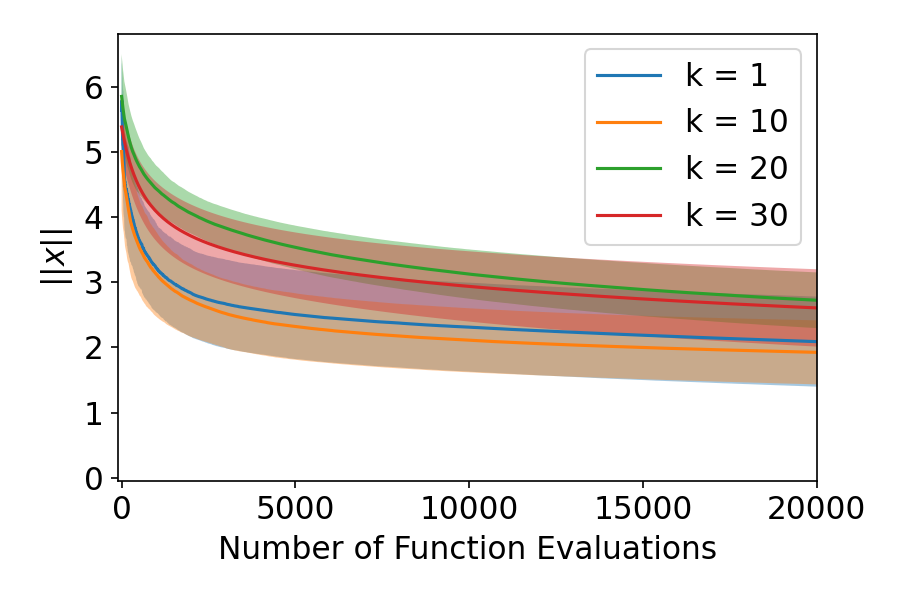}} 
    \subfloat[]{\includegraphics[width= 0.24\textwidth]{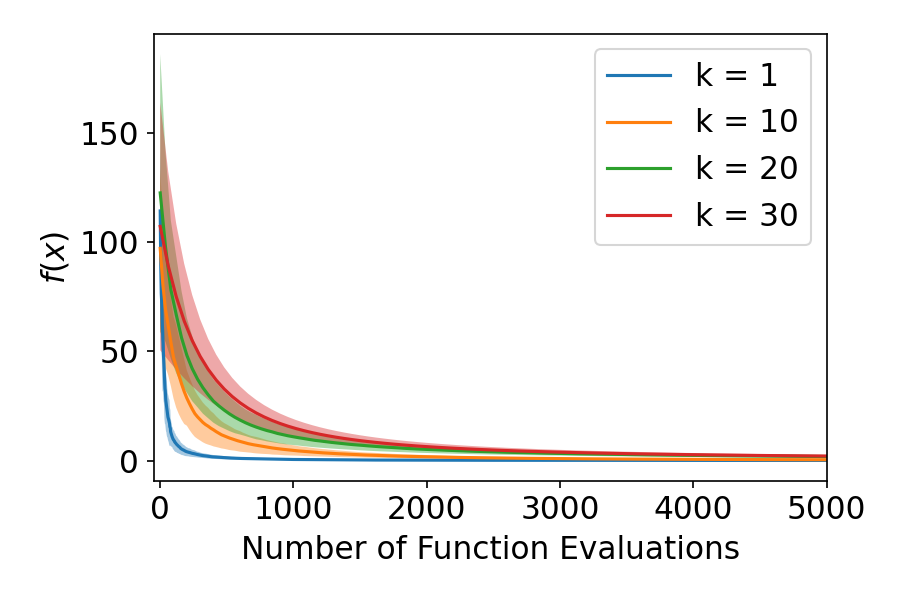}} 
    \subfloat[]{\includegraphics[width= 0.24\textwidth]{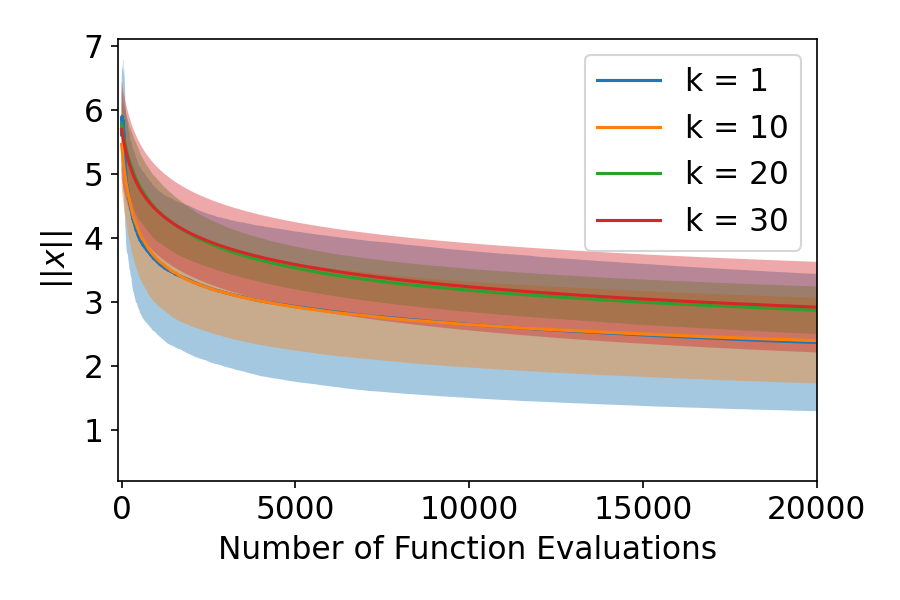}} 
    \subfloat[]{\includegraphics[width= 0.24\textwidth]{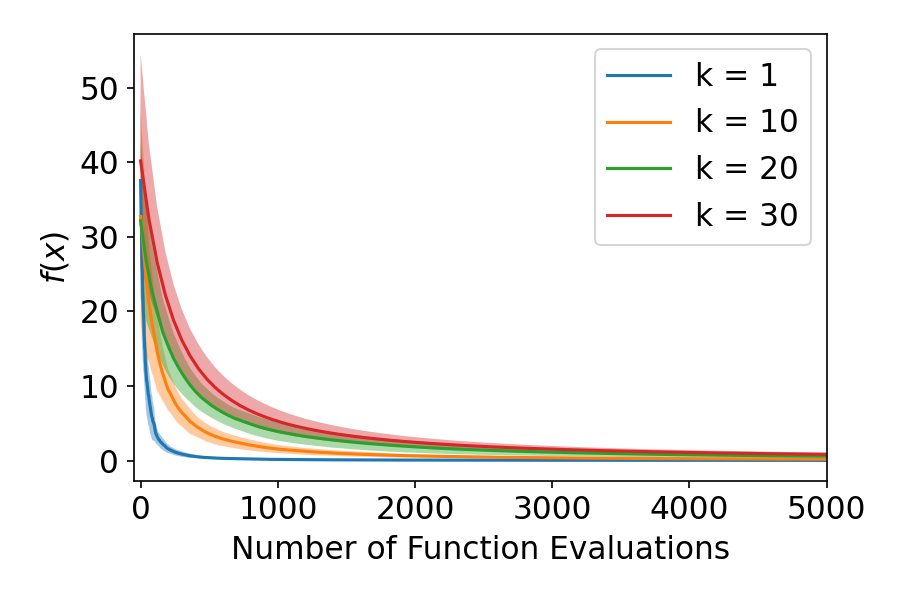}}
    \caption{Results of SZGD and GD on test functions $F_1$ (subfigures (a) and (b)) and $F_2$ (subfigures (c) and (d)). The solid lines show average results over 10 runs. The lines labeled with $k = 1$ (resp. $k=10$, etc.) show results of SZGD with $k = 1$ (resp. $k=10, etc.$). Subfigures (a) and (c) show observed convergence rate results for $\{ \| \x_t - \x_\infty \|\}_t$ ($\x_\infty = 0$); Subfigures (b) and (d) show observed convergence rate results for $\{ \| F_2 (\x_t) - F_2 (\x_\infty) \|\}_t$ ($F_2 (\x_\infty) = 0$). The step size $\eta$ is set to $0.005$. The shaded areas below and above the solid lines indicate 1 standard deviation around the average. Unlike Figure \ref{fig:F}, this figure plots errors (either $\| \x_t - \x_\infty \|$ or $ f( \x_t ) - f ( \x_\infty) $) against number of function evaluations. For example, when $k = 10$, one iteration requires $10 \times 2 = 20$ functions evaluations. \label{fig:sample}} 
\end{figure}

\section{Proof of Theorem \ref{thm:variance}}
\label{sec:proof}

To start with, we need the following facts in Propositions \ref{prop:uniform} and \ref{prop:sphere-exp}. 

\begin{proposition}[\cite{chikuse2003mani}]  
    \label{prop:uniform} 
    Let $ \V := [ \v_1, \v_2, \cdots, \v_k] \in \R^{n \times k}$ be uniformly sampled from the Stiefel manifold $\text{St}(n,k)$. Then the marginal distribution for any $\v_i$ is uniform over the unit sphere $\S^{n-1}$. 
\end{proposition}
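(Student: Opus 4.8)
The plan is to exploit the defining left-invariance of the uniform (Haar) distribution on the Stiefel manifold under the orthogonal group $O(n)$, together with the uniqueness of the rotation-invariant probability measure on the sphere.

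First I would recall that the uniform distribution on $\text{St}(n,k)$ is, by construction, invariant under the map $V \mapsto QV$ for every $Q \in O(n)$. This is immediate from whichever realization one adopts: viewing the uniform law as the pushforward of Haar measure on $O(n)$ onto its first $k$ columns, or, as described after Eq.~(\ref{eq:def-grad-est}), generating $U \in \R^{n\times k}$ with i.i.d.\ standard Gaussian entries and applying Gram--Schmidt, in which case the invariance is inherited from the rotational invariance of the Gaussian ensemble. Now fix an index $i$ and let $\pi_i : V \mapsto v_i$ be the $i$-th column map. Since $\pi_i(QV) = Q v_i = Q\,\pi_i(V)$, the law $\mu_i$ of $v_i$ satisfies $Q_*\mu_i = \mu_i$ for every $Q \in O(n)$; moreover every column of an element of $\text{St}(n,k)$ has unit norm (read off the diagonal of $V^\top V = I_k$), so $\mu_i$ is a Borel probability measure supported on $\S^{n-1}$.

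It then remains to invoke that an $O(n)$-invariant Borel probability measure on $\S^{n-1}$ is unique, namely the normalized surface measure. This follows because $O(n)$ acts transitively on $\S^{n-1}$: for any invariant $\nu$ and any continuous $g$, Fubini and invariance give $\int g\, d\nu = \int_{O(n)}\!\!\int_{\S^{n-1}} g(Qx)\, d\nu(x)\, dQ = \int_{\S^{n-1}} \bar g\, d\nu$, where $\bar g(x) = \int_{O(n)} g(Qx)\, dQ$ is constant in $x$ by transitivity; hence $\nu$ integrates every continuous function the same way as the normalized uniform measure, so the two coincide. Applying this to $\nu = \mu_i$ yields the claim. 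The argument is essentially routine; the only point needing a sentence of care is the justification that the "uniform distribution on $\text{St}(n,k)$" really is left-$O(n)$-invariant under the constructive definition used in the paper, and that transitivity of the $O(n)$-action on $\S^{n-1}$ delivers uniqueness. I do not anticipate a genuine obstacle.
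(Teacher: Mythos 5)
Your argument is correct and complete, but it is not the route the paper takes: the paper does not prove Proposition \ref{prop:uniform} at all --- it is imported from \cite{chikuse2003mani}, and the only justification offered in the text is the remark that the uniform (rotation-invariant Hausdorff) measure on $\text{St}(n,k)$ decomposes as a wedge product of the spherical measure on $\S^{n-1}$ and the uniform measure on $\text{St}(n-1,k-1)$, from which the marginal of a column is read off directly. Your proof instead runs through the symmetry channel: left $O(n)$-invariance of the uniform law on $\text{St}(n,k)$ (inherited from the Gaussian-plus-Gram--Schmidt construction, since Gram--Schmidt commutes with left multiplication by orthogonal matrices), equivariance of the column projection $\pi_i$, and uniqueness of the $O(n)$-invariant Borel probability measure on $\S^{n-1}$ via transitivity and the Fubini averaging trick. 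The two approaches buy different things. The wedge-product decomposition of \cite{chikuse2003mani} is heavier machinery (explicit differential-form computations on the manifold) but yields the full joint structure --- in particular the conditional law of the remaining columns given $v_i$ --- which is why it is the form the paper quotes. Your invariance argument is more elementary and essentially self-contained, needs no coordinates or volume forms, and generalizes verbatim to any equivariant statistic of $V$; it delivers exactly the marginal statement asked for and nothing more, which is all Proposition \ref{prop:uniform} requires. Either proof is acceptable here; yours has the advantage of making the paper independent of the citation for this particular fact.
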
 

In words, the uniform measure over the Stiefel manifold $\text{St} (n,k)$ can be decomposed into a wedge product of the spherical measure over $ \S^{n-1} $ and the uniform measure over $ \text{St} (n-1,k-1) $ \cite{chikuse2003mani}. 
Another useful fact is the following proposition. 
\begin{proposition}
    \label{prop:sphere-exp}
    Let $\v$ be a vector uniformly randomly sampled from the unit sphere $\S^{n-1}$. Then it holds that
        $\E \[ \v \v^\top \] = \frac{1}{n} \mathbf{I} ,$ 
    where $\mathbf{I} \in \mathbb{R}^{n \times n}$ is the identity matrix. 
\end{proposition}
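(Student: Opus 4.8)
The plan is to exploit the rotational symmetry of the uniform (spherical) measure on $\S^{n-1}$. First I would record the standard fact that this measure is invariant under every orthogonal transformation: if $O \in \R^{n \times n}$ satisfies $O^\top O = I$ and $v$ is uniform on $\S^{n-1}$, then $Ov$ is again uniform on $\S^{n-1}$. Consequently $\E\[ (Ov)(Ov)^\top \] = \E\[ v v^\top \]$, that is, $O\, \E\[ v v^\top \]\, O^\top = \E\[ v v^\top \]$ for every orthogonal $O$.

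Next I would invoke the elementary linear-algebra fact that a symmetric matrix $M$ with $O M O^\top = M$ for all orthogonal $O$ must be a scalar multiple of the identity. (One way: diagonalize $M$; if two of its eigenvalues differed, a rotation in the corresponding coordinate plane would fail to fix $M$. Alternatively, this is Schur's lemma applied to the irreducible action of the orthogonal group on $\R^n$.) Hence $\E\[ v v^\top \] = c I$ for some constant $c \ge 0$.

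Finally I would identify $c$ by taking traces: $c n = \mathrm{tr}(c I) = \mathrm{tr}\big( \E\[ v v^\top \] \big) = \E\big[ \mathrm{tr}( v v^\top ) \big] = \E\[ \| v \|^2 \] = 1$, since $\| v \| = 1$ on $\S^{n-1}$. Therefore $c = \tfrac{1}{n}$, which is the claimed identity.

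There is no real obstacle here; the argument is routine, and the only point deserving a word of justification is the $O(n)$-invariance of the spherical measure, which is standard. If one prefers to avoid the commutation lemma, an alternative entrywise route works equally well: by permutation symmetry of the coordinates all diagonal entries $\E\[ v_i^2 \]$ are equal and sum to $\E\[ \| v \|^2 \] = 1$, so each equals $\tfrac{1}{n}$; and by the sign-flip symmetry $v \mapsto (v_1, \dots, -v_i, \dots, v_n)$, which preserves the uniform measure, every off-diagonal entry satisfies $\E\[ v_i v_j \] = -\E\[ v_i v_j \]$ and hence vanishes.
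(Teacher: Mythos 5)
Your proposal is correct. Your primary route differs from the paper's: the paper proves the identity entrywise, arguing that the off-diagonal terms $\E\[ v[i] v[j] \]$ vanish by a conditional-symmetry argument (the conditional expectation of $v[i]$ given $v[j]$ is zero) and that the diagonal terms are equal by symmetry and sum to $\E\[ \| v \|^2 \] = 1$, whereas you first establish $O\,\E\[ v v^\top \]\,O^\top = \E\[ v v^\top \]$ for every orthogonal $O$, conclude $\E\[ v v^\top \] = cI$ via the commutant/Schur's-lemma fact, and then identify $c$ by a trace computation. Your approach is slightly less elementary but more robust: it applies verbatim to any rotation-invariant distribution with finite second moments, not just the uniform measure on the sphere, and it avoids coordinate manipulations entirely. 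The entrywise alternative you sketch at the end is essentially the paper's proof, and your sign-flip argument for the off-diagonal entries is arguably cleaner than the paper's conditional-expectation phrasing, which as written conflates $\E\[ v[i] v[j] \]$ with $\E\[ v[i] \mid v[j] = a \]$. Either route is complete and correct.
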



\begin{proof}
    Let $ v_i $ be the $i$-th entry of $\v$. 
    For any $a \in [-1,1]$ and $ i \neq j$, it holds that $ \E \[ v_i v_j \] = \E \[ v_i | v_j = a \] = 0 $. Thus $ \E \[ v_i v_j \] = 0 $ for $i \neq j$. Also, it holds that 
        $ 1 = \E \[ \| \v \|^2  \] = \sum_{i=1}^n \E \[ v_i^2 \], $
    which concludes the proof since $ \E \[ v_i^2 \] = \E \[ v_j^2 \] $ for any $i,j =1,2,\cdots,n$ (by symmetry). 
\end{proof} 



\begin{proof}[Proof of Theorem \ref{thm:variance}] 
    Since $ f (\x)$ is $L$-smooth ($\nabla f (\x)$ is $L$-Lipschitz), $\nabla^2 f (\x) $ (the weak total derivative of $ \nabla f (\x) $) is integrable. Let $\v \in \R^n$ be an arbitrary unit vector. When restricted to any line along direction $ \v\in \R^n $, it holds that $ \v^\top \nabla^2 f (\x) \v $ (the weak derivative of $ \v^\top \nabla (\x) $ along direction $\v$) has bounded $L_\infty$-norm. This is due to the fact that Lipschitz functions on any closed inteval $[a,b]$ forms the Sobolev space $ W^{1,\infty} [a,b] $. 
    
    Next we look at the variance bound for the estimator. Without loss of generality, we let $\x = \0$. Bounds for other values of $\x$ can be similarly obtained.

    Taylor's expansion of $ f $ with integral form gives 
    \begin{align*} 
        f (\delta \v_i) 
        =& \; 
        f (\0) + \delta \v_i^\top \nabla f (\0) + \int_{0}^{\delta} (\delta - t) \v_i^\top \nabla^2 f ( t \v_i ) \v_i \, dt  
    \end{align*} 
    
    Thus for any $\v_i \in \S^{n-1}$ and small $\delta$, 
    \begin{align*} 
        &\; \frac{1}{2} \big( f (\delta \v_i) - f (-\delta \v_i) \big) \\
        =& \;  
        \delta \v_i^\top \nabla f (\0) + \frac{1}{2} \int_{0}^{\delta} (\delta - t) \v_i^\top \nabla^2 f ( t \v_i ) \v_i \, dt - \frac{1}{2} \int_{0}^{-\delta} (-\delta - t) \v_i^\top \nabla^2 f ( t \v_i ) \v_i \, dt . 
    \end{align*} 
    For simplicity, let $ R_i = \int_{0}^{\delta} (\delta - t) \v_i^\top \nabla^2 f ( t \v_i ) \v_i \, dt - \int_{0}^{-\delta} (-\delta - t) \v_i^\top \nabla^2 f ( t \v_i ) \v_i \, dt  $, and Cauchy--Schwarz inequality gives 
    \begin{align*} 
        | R_i | 
        \le& \;  
        \frac{1}{2} \( \int_{0}^{\delta} (\delta - t)^2 \, dt  \)^{1/2} \( \int_0^\delta \( \v_i^\top \nabla^2 f ( t \v_i ) \v_i \)^2 \, dt \)^{1/2} \\
        & + \frac{1}{2} \( \int_{-\delta}^0 (-\delta - t)^2 \, dt \)^{1/2} \( \int_{-\delta}^0 \( \v_i^\top \nabla^2 f ( t \v_i ) \v_i \)^2 \, dt \)^{1/2} 
        \le 
        \frac{2L}{\sqrt{3}} \delta^2 
    \end{align*} 
    for all $i = 1,2,\cdots,k$. 
    For any $i,k,n$, it holds that 
    \begin{align*} 
        &\; \E \[ \left\| \frac{1}{2} \( f (\delta \v_i) - f (-\delta \v_i) \) \v_i \right\|^2 \] - \left\| \E \[ \frac{ \sqrt{k}}{2} \( f (\delta \v_i) - f (-\delta \v_i) \) \v_i \] \right\|^2 \\ 
        =& \; 
        \E \[ \left\| \( \delta \v_i^\top \nabla f (\0) + R_i \) \v_i \right\|^2 \] 
        - 
        k \left\| \E \[ \( \delta \v_i^\top \nabla f (\0) + R_i \) \v_i \]  \right\|^2 \\ 
        \overset{\textcircled{1}}{=}& \; 
        \E \[ \delta^2 \nabla f (\0)^\top  \v_i \v_i^\top \nabla f (\0) + 2 \delta \nabla f (\0)^\top \v_i R_i + R_i^2 \] - 
        k \left\| \E \[ \delta \v_i \v_i^\top \nabla f (\0) + R_i \v_i \] \right\|^2 . 
    \end{align*} 
    
    Since $ \E \[ \v_i \v_i^\top \] = \frac{1}{n} I$ (Propositions \ref{prop:uniform} and \ref{prop:sphere-exp}), \textcircled{1} gives 
    \begin{align*} 
        & \; \E \[ \left\| \frac{1}{2} \( f (\delta \v_i) - f (-\delta \v_i) \) \v_i \right\|^2 \] - \left\| \E \[ \frac{ \sqrt{k} }{2} \( f (\delta \v_i) - f ( - \delta \v_i) \) \v_i \] \right\|^2 \\ 
        =& \; 
        \frac{ \delta^2 }{ n } \| \nabla f (\0) \|^2 + 2 \delta \nabla f (\0)^\top \E \[ R_i \v_i \] + \E \[ R_i^2 \] 
        - 
        k \left\| \E \[ \frac{ \delta}{n} \nabla f (\0) + R_i \v_i \] \right\|^2 \\ 
        =& \; 
        \( \frac{\delta^2}{n}  - \frac{\delta^2 k }{n^2} \) \| \nabla f (\0) \|^2 + \( 2 \delta  - \frac{2 \delta k }{n} \) \nabla f (\0)^\top \E \[ R_i v_i \] 
        + \E \[ R_i^2 \] - k \left\| \E \[ R_i \v_i \] \right\|^2 \\ 
        \overset{\textcircled{2}}{\le}& \; 
        \( \frac{\delta^2}{n}  - \frac{ \delta^2 k }{n^2} \) \| \nabla f (\0) \|^2 + \frac{ 4 L \delta^3}{ \sqrt{3} } \( 1 - \frac{ k }{n} \) \| \nabla f (\0) \| 
        + 
        \frac{ 4 L^2 \delta^4 }{3} . 
    \end{align*}
    
    For the variance of the gradient estimator, we have 
    \begin{align*} 
        &\; \E \[ \left\| \wh{\nabla} f_k^\delta (\0) - \E \[ \wh{\nabla} f_k^\delta (\0) \] \right\|^2 \] \\
        =& \; 
        \E \[ \left\| \wh{\nabla} f_k^\delta (\0) \right\|^2 \] - \left\| \E \[ \wh{\nabla} f_k^\delta (\0) \] \right\|^2 \\ 
        =& \; 
        \E \[ \left\| \frac{n}{2 \delta k} \sum_{i=1}^k \( f (\delta \v_i) - f (\delta \v_i) \) \v_i \right\|^2 \] 
        - \left\| \frac{n}{2 \delta k} \sum_{i=1}^k \E \[ \( f (\delta \v_i) - f (-\delta \v_i) \) \v_i \] \right\|^2 \\ 
        \overset{\textcircled{3}}{=}& \; 
        \frac{n^2}{\delta^2 k^2} \sum_{i=1}^k \E \[ \left\| \frac{1}{2} \( f (\delta \v_i) - f (\delta \v_i) \) \v_i \right\|^2 \] \\
        &- \frac{ n^2 }{4 \delta^2 k^2 } \sum_{i,j=1}^k \E \[ \( f (\delta \v_i) - f (-\delta \v_i) \) \v_i \]^\top \E \[ \( f (\delta \v_j) - f (-\delta \v_j) \) \v_j \] , 
    \end{align*} 
    where the last equation follows from the orthonormality of $\{ \v_1, \v_2. \cdots, \v_k \}$. By Proposition \ref{prop:uniform}, we know that 
    $ \E \[ \( f (\delta \v_i) - f (-\delta \v_i) \) v_i \] = \E \[ \( f (\delta \v_j) - f (-\delta \v_j) \) \v_j \]$ for all $i,j = 1,2,\cdots,k$. Thus \textcircled{3} gives
    \begin{align*}
        & \; \E \[ \left\| \wh{\nabla} f_k^\delta (\0) - \E \[ \wh{\nabla} f_k^\delta (\0) \] \right\|^2 \] \\
        \overset{\textcircled{4}}{=}& \; 
        \frac{n^2}{\delta^2 k^2} \sum_{i=1}^k 
        \( \E \[ \left\| \frac{1}{2} \( f (\delta \v_i) - f (-\delta \v_i) \) v_i \right\|^2 \] - \left\| \E \[ \frac{ \sqrt{k} }{2} \( f (\delta \v_i) - f (-\delta \v_i) \) v_i \] \right\|^2 \).
    \end{align*}
    
    Combining \textcircled{2} and \textcircled{4} gives 
    \begin{align*}
        & \; \E \[ \left\| \wh{\nabla}f_k^\delta (\0) - \E \[ \wh{\nabla}f_k^\delta (\0) \] \right\|^2 \] \\
        =& \; 
        \frac{n^2}{\delta^2 k^2}\sum_{i=1}^k \(  \E \[ \left\| \frac{1}{ 2 } \( f (\delta \v_i) - f (\delta \v_i) \) v_i \right\|^2 \] - \left\| \E \[ \frac{ \sqrt{k} }{2 } \( f (\delta \v_i) - f (-\delta \v_i) \) v_i \] \right\|^2 \) \\ 
        \le& \; 
        \( \frac{n}{k} - 1 \) \| \nabla f (\0) \|^2 + \frac{ 4 L \delta}{ \sqrt{3} } \( \frac{n^2}{k}  - n \) \| \nabla f (\0) \| + \frac{ 4 L^2 n^2 \delta^2 }{ 3 k } . 
    \end{align*} 
\end{proof}




\section{Conclusion} 

This paper studies the SZGD algorithm and its performance on \Loj functions. In particular, we establish convergence rates for SZGD algorithms on \Loj functions. Our results show that access to noiseless zeroth-order oracle is sufficient for optimizing \Loj functions. We show that SZGD exhibits convergence behavior similar to its non-stochastic counterpart. Our results suggests $ \{ f (\x_t) \}_t $ tend to converge faster than $ \{ \| \x_t - \x_\infty \| \}_t $. We also observe some intriguing facts in the empirical studies. In particular, there might be an optimal choice of $k$ for SZGD to achieve a good convergence rate for $ \{ \| \x_t - \x_\infty \| \}_t $. 


\section*{Acknowledgement}

Tianyu Wang thanks Kai Du and Bin Gao for helpful discussions, and thanks Bin Gao for pointers to some important related works. 




\end{document}


\maketitle

\section{A detailed example}

Here we include some equations and theorem-like environments to show
how these are labeled in a supplement and can be referenced from the
main text.
Consider the following equation:
\begin{equation}
  \label{eq:suppa}
  a^2 + b^2 = c^2.
\end{equation}
You can also reference equations such as \cref{eq:matrices,eq:bb} 
from the main article in this supplement.

\lipsum[100-101]

\begin{theorem}
An example theorem.
\end{theorem}

\lipsum[102]
 
\begin{lemma}
An example lemma.
\end{lemma}

\lipsum[103-105]

Here is an example citation: \cite{KoMa14}.

\section[Proof of Thm]{Proof of \cref{thm:bigthm}}
\label{sec:proof}

\lipsum[106-112]

\section{Additional experimental results}
\Cref{tab:smfoo} shows additional
supporting evidence. 

\begin{table}[htbp]
\footnotesize
  \caption{Example table.}\label{tab:smfoo}
\begin{center}
  \begin{tabular}{|c|c|c|} \hline
   Species & \bf Mean & \bf Std.~Dev. \\ \hline
    1 & 3.4 & 1.2 \\
    2 & 5.4 & 0.6 \\ \hline
  \end{tabular}
\end{center}
\end{table}

\bibliographystyle{siamplain}
\bibliography{references}